\newtheorem{theorem}{Theorem}[section]
\newtheorem{lemma}{Lemma}[section]
\newtheorem{corollary}{Corollary}[section]
\newtheorem{proposition}{Proposition}[section]
\newtheorem{definition}{Definition}[section]
\theoremstyle{plain}
\newcommand{\NN}{\mathbb N}
\newcommand{\ZZ}{\mathbb Z}
\newcommand{\QQ}{\mathbb Q}
\newcommand{\lex}{\stackrel{\mbox{\tiny{\emph{lex}}}}{\times}}
\title{Tropicalization through the lens of {\L}ukasiewicz logic, with a topos theoretic perspective}
\author[$\star$]{A. Di Nola}
\author[$\star$]{G. Lenzi}
\author[$\dag$]{B. Gerla}
\affil[$\star$]{Dep. Mathematics, University of Salerno (Italy) }
\affil[ ]{\tt{\{adinola, gilenzi\}@unisa.it}}
\affil[$\dag$]{Dep. Theoretical and Applied Sciences, University of Insubria (Italy)}
\affil[ ]{\tt{brunella.gerla@uninsubria.it}}
\date{}
\begin{document}

\maketitle

\begin{abstract}
The main aim of this paper is to show the interconnections between {\L}ukasiewicz logic and algebraic geometry using algebraic, geometric and logical instruments.
We continue our investigation into a new algebraic geometry based on idempotent semifields, in particular those related with MV-algebras, describing categorical equivalence between different structures related to tropical geometry and many valued logics. Further, we describe such connections in terms of topoi.
\end{abstract}

\section{Introduction}

The main aim of this paper is to combine the ideas of
\cite{CDM},  \cite{DL94}, \cite{R} and \cite{DG05} to show that the topics of {\L}ukasiewicz logic, tropical structures and algebraic geometry fruitfully meet. This gives rise to a topos theoretic perspective to {\L}ukasiewicz logic, see \cite{CC}. Our aim seems to be completely in line with the spirit of the following remark from \cite{M08}:\vskip 5pt

"$\emph{The confluence of geometric, combinatorial and}$

$\emph{logical-algebraic techniques on a common problem}$

$\emph{is one of the manifestations of the unity of mathematics}$."\vskip 5pt

Tropicalization, originally, is a method aimed at simplifying algebraic geometry and it is used in many applications. It can be seen as a part of idempotent algebraic geometry. Tropical geometry produces objects which are combinatorially similar to algebraic varieties, but piecewise linear. That is, tropicalization attaches a polyhedral complex to an algebraic variety,
obtaining a kind of algebraic geometry over idempotent semifields.

Originally the idea was focused on complex algebraic geometry and started from a field (or ring) $K$ and its polynomials, an idempotent semiring
$(S,\wedge,+,0,1)$ and a valuation $v:K\to S$.
A semiring is an abelian monoid $(A,+,0)$ with a multiplicative monoid structure $(A,\cdot,1)$ satisfying the distributive laws and such that $a \cdot 0 = 0 \cdot a = 0$ for all $a \in A$. A semiring is idempotent if $+$ is idempotent.

Usually $S=G\cup\{\infty\}$ where $G$ is a totally ordered abelian group and $\infty$ is an infinite extra element, but we suggest to be more liberally inspired by \cite{R}: first, $G$ can be any lattice ordered abelian group, and $S=G\cup\{\infty\}$ is a semifield (the $\emph{Rump semifield}$ of $G$) . Moreover, we point out that the passage from $G$ to $S$ and conversely is functorial, even an equivalence. So we propose here a $\emph{functorial tropicalization}$.

In this paper we intend to contribute to what we mean by tropical mathematics, which for us, in a very broad sense, is the use of idempotent semirings in mathematics.

The most used semiring in tropicalization is the tropical semiring $$\mathbb{R}^{maxplus}= (\mathbb{R}\cup\{-\infty\},max,+,-\infty,0)$$ (or its dual with min instead of max).
The variety generated by $\mathbb{R}^{maxplus}$ includes properly the semiring reduct of $[0,1]$. On the other hand, if we consider the negative cone of the reals, $\mathbb{R}^{neg}=(\mathbb{R}^{\leq 0}\cup\{-\infty\},max,+,-\infty,0)$, we have that the varieties generated by $[0,1]$ and $\mathbb{R}^{neg}$ are the same. Another famous semiring is the tropical semiring $\mathbb{N}^{trop}=(\mathbb{N}\cup\{+\infty\},min,+,+\infty,0)$. Once again, the varieties generated by $\mathbb{N}^{trop}$ and $[0,1]$ are the same. However, $\mathbb{N}^{trop}$ and $[0,1]$ have different first order theories. Thus, looking at these two varieties from a logical point of view, some differences emerge. This justifies approaching tropical algebraic structures from a logical point of view.

One can try to apply the same tropicalization idea to more general frameworks, like universal algebraic geometry by Plotkin \cite{P}. This paper proposes a possible generalization of tropicalization of algebraic structures
using many valued logic and especially fuzzy logic. In fact, the algebraic structures of fuzzy {\L}ukasiewicz logic (MV-algebras) have a natural structure of idempotent semiring.

{\L}ukasiewicz logic is a fuzzy logic, that is a logic where the set of values is the real interval $[0,1]$, rather than the case $\{0,1\}$ of classical logic.
The connectives of {\L}ukasiewicz logic are $x\oplus y=min(x+y,1)$ (replacing OR) and $\neg x=1-x$ (replacing NOT). Then AND gets replaced by $x\odot y=max(0,x+y-1)$ (the {\L}ukasiewicz product). We have the tertium non datur, $x\oplus \neg x=1$, the non contradiction law $x\odot \neg x=0$, but not the idempotency: $x\oplus x\not=x$. Despite the lack of idempotency,  MV-algebras are deeply connected with idempotent structures.

{\L}ukasiewicz logic can be axiomatized as follows, where $x\to y$ means $\neg x\oplus y$:

\begin{enumerate} \item $x\to(y\to x)$
\item $(x\to y)\to((y\to z)\to (x\to z))$
\item $((x\to y)\to y))\to ((y\to x)\to x)$
\item $(\neg x\to\neg y)\to(y\to x)$.
\end{enumerate}
The only rule is modus ponens: from $x\to y$ and $x$ derive $y$.

Like the semantic counterpart of classical logic is given by Boolean algebras,
the semantic counterpart of {\L}ukasiewicz logic is given by MV-algebras.

We focus on algebraic models of an equational extension of {\L}ukasiewicz logic. Actually we consider the extension of {\L}ukasiewicz logic given by the equation in the section 3.1 that generates the variety containing all {\em perfect MV-algebras}. Perfect MV-algebras are characterized by the fact that their elements are either infinitesimals or negation of infinitesimals.

 The interplay between semirings and MV-algebras is very interesting. Every MV-algebra is in a natural way (better, in two natural dual ways) a semiring, as explained in \cite{IEEE17}, \cite{DG05}.
Indeed, any MV-algebra $A$ has two semiring reducts that are $(A,0,1,\wedge,\oplus)$ and $(A,0,1,$ $\vee,$ $\odot)$. They are isomorphic semirings and an isomorphism is given by the negation (see \cite{DG05}).
Starting from such observation, we consider idempotent structures built from MV-algebras and from that we develop a new kind of algebraic geometry.In this paper we are inspired by universal algebraic geometry, a research program originated in \cite{P}. In particular, we apply algebraic geometry to MV algebras (see also \cite{JSL}) and to the interesting variety $V(C)$, generated by Chang MV algebra $C$. Intuitively, $V(C)$ algebras are Boolean algebras perturbed with infinitesimals. We think that the idea of generalizing algebraic geometry from rings and fields to varieties of universal algebra, instead of being a useless exercise, has been the key to solving difficult problems, like the conjectures  of Tarski for free groups:

{\bf Tarski Conjecture 1} Any two non-abelian free groups are elementarily equivalent. That is any two non- abelian free groups satisfy exactly the same first-order theory.

{\bf Tarski Conjecture 2} If the non-abelian free group H is a free factor in the free group G then the inclusion map  is an elementary embedding, see \cite{FGR}.

We hence organized the rest of the paper as follows:

In section $2$ we discuss the relations between MV-algebras, semirings and semifields, showing that the category of MV-algebras is equivalent to semiring based categories.

In section $3$ we use the variety $V(C)$ of MV algebras generated by $C$ to give a functorial notion of  tropicalization of an algebraic structure. We also present a geometric invariant $\theta$ for algebras from the variety $V(C)$ which has role in the definition of tropicalization functor. We further analyze the algebraic aspects of $\theta$ invariants, showing that they can be endowed with a rich algebraic structure.

In section $4$ we will show how a logic admitting only truth values infinitesimally close to boolean values, surprisingly, it can exhibit models that are functorially connected to a category of points of a non-commutative geometry

In the last section we sketch some conclusions.

\section{MV-algebras, semirings and semifields}\label{sect:semi}
MV-algebras are structures $(A,\oplus,0,1,\neg)$  where $(A,\oplus,0)$ is a commutative monoid, $\neg 0=1$, $x\oplus 1=1$, $\neg\neg x=x$  and $\neg(\neg x\oplus y)\oplus y)=\neg(\neg y\oplus x)\oplus x)$.

MV-algebras are also lattices under the ordering $x\leq y$ such that there is $z$ with $y=x\oplus z$. We let also $x\odot y=\neg(\neg x\oplus\neg y)$ and $x \ominus y=x \odot \neg y= \neg (\neg x \oplus y)$.

The main example of MV-algebra is $[0,1]$ with $x\oplus y=min(x+y,1)$ and $\neg x=1-x$. By \cite{D91} it follows that $[0,1]$ generates the variety of MV-algebras. Further, interpretation of formulas of {\L}ukasiewicz logic in the MV-algebra $[0,1]$ give a complete and sound semantics for the logic.

In order to define other interesting MV-algebras it is convienient to introduce Mundici functor $\Gamma$, see \cite{M86}. $\Gamma$ is a
 functorial equivalence between the category of MV-algebras and the category of Abelian $\ell$-groups with {\em strong unit}, i.e. an element $u \in G$, $u \geq 0$ such that
 for every element $g \in G$
 there exists a natural number
$n\geq 1$ such that
$g \leq nu$.

This equivalence was originally  used in order to clarify the relations between $AF$ $C^*$-algebras and their $K_0$ group. For any $\ell$-group $G$ with strong unit $u$, we get an MV-algebra $\Gamma(G,u)$ as the interval $[0,u]$ of $G$ where $x\oplus y=(x+y)\wedge u$ and $\neg x=u-x$.

 In case $G$ is the lexicographic product of two copies of the additive group of integer numbers, we get {\em Chang MV-algebra} $C=\Gamma(\ZZ\, \lex\,$ $\ZZ,$ $(1,$ $0))$.

Let us define {\em infinitesimal} an element $x$ such that $nx\leq \neg x$ for every positive integer $n$. The idea is that $x\leq 1/n$, but $1/n$ does not necessarily exist in an MV-algebra. The set of all infinitesimals of an MV-algebra $A$ is called the radical of $A$ and is denoted by $Rad(A)$. A perfect MV-algebra is an MV-algebra generated by its radical, hence by its infinitesimals. So, every perfect MV-algebra $A$ is the disjoint union of its radical and its co-radical, i.e., the set of negations of elements of the radical: $A=Rad(A) \cup co-Rad(A)$.

Note that $[0,1]$ has no infinitesimals, whereas $C$ has them. Indeed, $C=\{(0,n),(1,-m) \mid n,m \in \NN\}$ consists only of infinitesimals $(0,n)$ and negations of infinitesimals $(1,-m)$, that is, is perfect.
Every MV-algebra has a largest perfect subalgebra, consisting of the radical and coradical.

MV-algebras form a variety having countably many subvarieties. We will focus on the subvariety generated by $C$, denoted by $V(C)$. This variety is axiomatized by
\begin{center}
$(2x)^2=2(x^2)$
\end{center}
where $2x$ means $x\oplus x$ and $x^2$ means $x\odot x$.
Every perfect MV-algebra belongs to $V(C)$, but in $V(C)$ there are also MV-algebras that are not perfect, for example to $C \times C$.

In general, every MV-algebra $A$  has a Boolean part $B(A)$, a Boolean algebra consisting of the elements $x$ such that $x=x\oplus x$, and a perfect part $P(A)$, the perfect MV-algebra  generated by the infinitesimals of $A$. If $A\in V(C)$, then $A$ is a combination of $P(A)$ and $B(A)$, a perfect MV-algebra and a Boolean algebra. We can hence say that in $V(C)$-algebras every element is infinitely close to a Boolean element, that is to an element $x=x\oplus x$. So, we can consider $V(C)$-algebras as perturbed Boolean algebras (\cite{DLV22}).

Besides the categorical equivalence $\Gamma$, in   \cite {DL94} an equivalence $\Delta$ is established between abelian $\ell$-groups and perfect MV-algebras. Namely, if $G$ is an abelian $\ell$-group, then $\Delta(G)=\Gamma(\ZZ\ \lex\ G,(1,0))$. So, $C$ can also be defined as $\Delta(\ZZ)$.
By the functor $\Delta$, a perfect MV-algebra corresponds to an Abelian $\ell$-group. So in a sense, an MV-algebra in V(C) is a combination of a Boolean algebra and an Abelian $\ell$-group.  We can say that the Boolean algebra carries a qualitative information, and the $\ell$-group carries a quantitative information.


In the introduction,  we mentioned the role of semirings and idempotent semifields in the tropicalization issues. Now we observe that studies concerning the relationship between semirings and MV-algebras started in \cite{DG05} and developed in subsequent papers, see (\cite{IEEE17},\cite{DNR}, \cite{DNR2}, \cite{GB}).

Recall that in every additively idempotent semiring $(S,+,\cdot,0,1)$ there is a natural partial order such that $s\leq t$ if and only if $s+t=t$. Homomorphisms of additively idempotent semirings are increasing.

Now let us turn to semifields, which are our tropical objects.
A {\em semifield} is a semiring $(F,+,\times,0,1)$ where the  multiplication is commutative and every nonzero element has an inverse with respect to the multiplication.
A semifield is idempotent if $x+x=x$ for every $x\in F$.

We have several categorial results relating MV-algebras and semifields. First of all we have:
\begin{theorem}\label{th:idemsemifields}\cite{R}\label{thm:rump} There is an equivalence $\rho$ between the category of idempotent semifields and the category of
abelian $\ell$-groups.
\end{theorem}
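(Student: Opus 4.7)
The plan is to exhibit a pair of quasi-inverse functors between the two categories. Given an idempotent semifield $(F,+,\times,0,1)$, let $\rho(F)$ be the set $F^*=F\setminus\{0\}$ equipped with the multiplication $\times$ as group operation and the natural partial order inherited from $F$ (namely $a\leq b$ iff $a+b=b$). In the opposite direction, given an abelian $\ell$-group $(G,\cdot,e,\wedge,\vee)$, let $\sigma(G)=G\cup\{0\}$, where the new element $0$ is the additive (join) identity and is absorbing for multiplication, the addition on $G$ is the join, and the multiplication on $G$ is the group operation. Rump's observation is that these two constructions are mutually inverse up to natural isomorphism, and since semifield morphisms preserve $0$ (they restrict to $F^*$) and $\ell$-group morphisms extend uniquely to $G\cup\{0\}$ by sending $0$ to $0$, the assignments are functorial and the equivalence is established.

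First I would check that $\rho(F)$ is well-defined. Multiplicatively $(F^*,\times)$ is an abelian group by definition of semifield, but one must check $F^*$ is closed under $+$. This is where distributivity is used: if $s,t\neq 0$ and $s+t=0$, then multiplying by $s^{-1}$ gives $1+s^{-1}t=0$; but $0$ is the minimum of $F$ (since $0+x=x$ means $0\leq x$ for every $x$), and one easily sees $1+s^{-1}t\geq 1\neq 0$, a contradiction. The restricted $+$ is then the join on $F^*$, and the crucial step is to produce meets. I would define $a\wedge b := ab\,(a+b)^{-1}$ and verify, using only the semifield axioms and distributivity, that this is indeed the greatest lower bound, and that it together with $\vee=+$ yields a distributive lattice. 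Compatibility with multiplication follows from semifield distributivity: $c(a+b)=ca+cb=ca\vee cb$, so multiplication is an order-preserving (in fact lattice-preserving) group action.

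The verification that $\sigma(G)$ is an idempotent semifield is a straightforward case analysis on whether the arguments involve the adjoined element $0$, using that the join on $G$ is compatible with the group operation (so distributivity of $\cdot$ over $\vee$ holds in $G$). The natural isomorphisms $\sigma\rho(F)\cong F$ and $\rho\sigma(G)\cong G$ are then immediate, since $\sigma$ simply adjoins a bottom and $\rho$ removes it, and the structure on $F^*$ recovered after $\sigma\rho$ is exactly the original structure on $F^*$.

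The main technical obstacle is the construction of meets in $F^*$: one must certify that the formula $a\wedge b=ab\,(a+b)^{-1}$ really delivers the infimum with respect to the order defined by $+$, and use it to establish the standard $\ell$-group identity $(a\wedge b)(a\vee b)=ab$. All other steps — closure under $+$, distributivity, invariance under translation, extension of morphisms — are routine verifications from the semifield axioms. Once the lattice structure on $F^*$ is in place, the compatibility with multiplication is automatic by distributivity, and the equivalence follows.
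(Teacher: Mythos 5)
Your proposal is correct and follows exactly the construction the paper attributes to Rump: the paper gives no proof of its own, only the remark that $\rho$ sends $(F,+,\times,0,1)$ to the $\ell$-group $(F\setminus\{0\},\cdot,1)$ with $+$ as join, which is precisely your $\rho$, and your quasi-inverse $\sigma$ (adjoin a bottom/absorbing $0$) together with the meet formula $a\wedge b=ab\,(a+b)^{-1}$ supplies the standard verification that this works. No discrepancy with the paper's approach.
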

The equivalence in Theorem \ref{th:idemsemifields} acts by associating with every idempotent semifield $(F,+,\times,0,1)$ the abelian $\ell$-group $(F\setminus \{0\},\cdot, 1)$.

 The equivalence $\rho$ can be refined as follows. Let $(F,+,\times,0,1)$ be an idempotent semifield. Let $u\in F$. We say that $u$ is a strong unit of $F$ is $u\not=0$ and for every $x\in F,x\not=0$, there is $n$ such that $x^n\leq u$. Again, by removing the zero of the semifield and by composing with functor $\Gamma$, we get the following:

\begin{theorem}\label{thm:rumpmv} (\cite{DNR2}) There is an equivalence between idempotent semifields with a strong unit and MV-algebras.
\end{theorem}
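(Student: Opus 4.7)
The plan is to derive the claim as the composition of two equivalences we already have in hand: Rump's equivalence $\rho$ of Theorem \ref{thm:rump} between idempotent semifields and abelian $\ell$-groups, and Mundici's $\Gamma$ between abelian $\ell$-groups with strong unit and MV-algebras. Let $\mathcal{F}_u$ denote the category whose objects are pairs $(F,u)$ with $F$ an idempotent semifield and $u$ a strong unit of $F$, and whose morphisms are semifield homomorphisms that send the distinguished strong unit to the distinguished strong unit. The claim is that $\Gamma \circ \rho$ restricts to an equivalence $\mathcal{F}_u \to \mathcal{MV}$.

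The first step is to verify that if $(F,u) \in \mathcal{F}_u$, then $u$, regarded as an element of the abelian $\ell$-group $\rho(F) = (F \setminus \{0\}, \cdot, 1)$, is a strong unit in the $\ell$-group sense. Since $\rho$ transports the semifield multiplication $\cdot$ to the group operation and preserves the induced order defined by $s \leq t \iff s + t = t$, the Archimedean-type condition on $u$ in $F$ translates, after rewriting the multiplicative $u^n$ as the $n$-fold sum $nu$ in the usual additive notation for $\ell$-groups, precisely to the standard Archimedean condition that defines a strong unit in $\rho(F)$. Applying $\Gamma$ to the pair $(\rho(F), u)$ then yields the MV-algebra associated with $(F, u)$.

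For a quasi-inverse, start with an MV-algebra $A$, apply $\Gamma^{-1}$ to obtain an $\ell$-group with strong unit $(G, u)$, and then apply $\rho^{-1}$, which amounts to adjoining a fresh absorbing bottom element $0$ to $G$ to produce the semifield $\rho^{-1}(G)$; the same translation shows $u$ remains a strong unit in the semifield sense. On morphisms, an $\mathcal{F}_u$-morphism preserves the distinguished unit by definition and so corresponds under $\rho$ to a unital $\ell$-group homomorphism and under $\Gamma$ to an MV-algebra homomorphism; conversely any MV-algebra homomorphism lifts uniquely through $\Gamma^{-1}$ and $\rho^{-1}$. Because $\rho$ and $\Gamma$ are already equivalences on their respective underlying categories, the two composites $\Gamma \circ \rho$ and $\rho^{-1} \circ \Gamma^{-1}$ are naturally isomorphic to the identity functors on $\mathcal{F}_u$ and $\mathcal{MV}$ respectively.

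The only delicate point, and hence the step I would write out carefully, is the matching of strong units across $\rho$: one must verify that the semifield condition defining a strong unit is genuinely the same data as an $\ell$-group strong unit, and that semifield morphisms preserving the strong unit are in bijection with unital $\ell$-group morphisms. Once this bookkeeping is settled, the remainder is a formal consequence of composing two known equivalences.
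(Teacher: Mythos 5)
Your proposal is correct and follows essentially the same route as the paper, which offers only the one-line sketch ``by removing the zero of the semifield and by composing with functor $\Gamma$'' and otherwise defers to \cite{DNR2}: you compose Rump's $\rho$ with Mundici's $\Gamma$ and check that strong units and unit-preserving morphisms match up across $\rho$. The only point worth writing out with care is the one you already flag, namely reconciling the paper's semifield condition $x^n\leq u$ with the additive $\ell$-group condition $g\leq nu$ under the multiplicative-to-additive translation.
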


By composing the functors $\Delta$ and $\rho$ we have another result concerning perfect MV-algebras:

\begin{theorem}\label{thm:perf} There is an equivalence between the categories of idempotent semifields  and perfect MV-algebras.
\end{theorem}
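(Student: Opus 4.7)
The plan is to obtain the equivalence simply by composing two equivalences already recorded in the excerpt, since the composition of categorical equivalences is an equivalence. Concretely, I would invoke Theorem \ref{th:idemsemifields}, which furnishes an equivalence $\rho$ between the category of idempotent semifields and the category of abelian $\ell$-groups, and then the equivalence $\Delta$ from \cite{DL94} between abelian $\ell$-groups and perfect MV-algebras, where $\Delta(G)=\Gamma(\ZZ\lex G,(1,0))$. Setting $\Phi=\Delta\circ\rho$ yields a functor from idempotent semifields to perfect MV-algebras, and its quasi-inverse is $\rho^{-1}\circ\Delta^{-1}$.

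To make the statement convincing rather than purely formal, I would spell out the action on objects. Given an idempotent semifield $(F,+,\times,0,1)$, the functor $\rho$ discards the $0$ and reads the multiplicative structure as an abelian $\ell$-group $(F\setminus\{0\},\times,1)$, with order induced by the additive idempotent order of $F$. Applying $\Delta$ then produces the perfect MV-algebra $\Gamma(\ZZ\lex(F\setminus\{0\}),(1,0))$, whose elements split as the radical $\{(0,x)\mid x\in F\setminus\{0\},\ x\leq 1\}\cup\{(0,0)\}$ and its co-radical. In the other direction, given a perfect MV-algebra $A$, the functor $\Delta^{-1}$ extracts the underlying abelian $\ell$-group from the radical (viewing infinitesimals as the nonnegative cone), and then $\rho^{-1}$ adjoins a bottom element $0$ and turns the $\ell$-group operations into the semifield operations $+=\vee$ and $\times=\cdot$.

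The verification that $\Phi$ is an equivalence is immediate from the fact that $\rho$ and $\Delta$ are equivalences, together with the fact that equivalences are closed under composition. Naturality of the unit and counit pass through automatically; no additional coherence check is required beyond what is already guaranteed by Theorem \ref{th:idemsemifields} and the Di Nola--Lettieri result for $\Delta$.

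I do not foresee a genuine obstacle here: the proof is a one-line composition argument. The only point deserving care is ensuring that the morphism sides match, that is, that $\rho$ sends semifield homomorphisms (which are in particular $\ell$-group homomorphisms on the multiplicative parts, since they preserve the $+$-induced order) to $\ell$-group homomorphisms, and conversely; this is part of Rump's theorem and is already being used in the statement of Theorem \ref{th:idemsemifields}, so nothing new arises when composing with $\Delta$.
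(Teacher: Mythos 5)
Your proof is correct and is exactly the paper's argument: the theorem is obtained by composing Rump's equivalence $\rho$ between idempotent semifields and abelian $\ell$-groups with the Di Nola--Lettieri equivalence $\Delta$ between abelian $\ell$-groups and perfect MV-algebras. (One minor slip in your illustrative aside: the radical of $\Gamma(\ZZ\lex(F\setminus\{0\}),(1,0))$ consists of the pairs $(0,x)$ with $x\geq 1$ in the multiplicative order, i.e.\ the positive cone of the $\ell$-group, not $x\leq 1$, and the group identity there is $1$ rather than $0$; this does not affect the argument.)
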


The previous theorems support the claim that tropical geometry can be made not only on semifields, $\emph{but even on MV-algebras}$.
That is we get a perspective for a tropical geometry of algebraic models of
{\L}ukasiewicz logics.

Theorems \ref{thm:rumpmv} and \ref{thm:perf} are the first steps of a comparison between tropical geometry and geometry over MV-algebras.

\section{Rump tropicalization and its generalization via $V(C)$ algebras: the functor $\theta$}\label{sect:rump}

In \cite{R} the author
established a category equivalence between  abelian $\ell$-groups and idempotent semifields.
We say that the {\em tropicalization} of a category (in the sense of Rump) is an equivalence functor over idepotent semifields.

Consider now the variety $V(C)$ generated by Chang's MV-algebra.

\medskip


\begin{definition}
Given any MV-algebra $A\in V(C)$, we let $$\theta(A)=\{x\in A|x\geq 2x^2\},$$
$$\theta^*(A)=\{x\in A|x\leq 2x^2\},$$
 where $x^2=x\odot x$ and $2x=x\oplus x$.
\end{definition}
For example, for $C=\Gamma(\ZZ \lex \ZZ,(1,0)$ we have
$\theta(C)=\{(0,n)\mid n \in \NN\} \cup \{(1,0)\}$.
Since $\theta(A)$ is the zeroset of an MV-polynomial, that is an algebraic variety of the $V(C)$, it is invariant with respect to isomorphic MV-algebras. In the following we collect some properties of $\theta$ and $\theta^*$:

\begin{proposition} For every $A\in V(C)$, $x\in\theta^*(A)$ if and only if $\neg x\in \theta(A)$. Hence $\theta(A)$ is closed under negation if and only if $\theta(A)=\theta^*(A)$.
\end{proposition}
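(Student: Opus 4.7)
The plan is to apply the negation $\neg$ to the defining inequality $x \leq 2x^2$ of $\theta^*(A)$ and simplify the resulting right-hand side. Since in any MV-algebra $\neg$ is order-reversing and involutive, the inequality $x \leq 2x^2$ holds if and only if $\neg x \geq \neg(2x^2)$, so the whole claim reduces to showing that $\neg(2x^2) = 2(\neg x)^2$ in $V(C)$.

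To evaluate $\neg(2x^2)$ I would use the De Morgan laws $\neg(a \oplus b) = \neg a \odot \neg b$ and $\neg(a \odot b) = \neg a \oplus \neg b$, valid in every MV-algebra. First, $\neg(x^2) = \neg(x \odot x) = \neg x \oplus \neg x = 2\neg x$. Then $\neg(2x^2) = \neg(x^2 \oplus x^2) = \neg(x^2) \odot \neg(x^2) = (2\neg x) \odot (2\neg x) = (2\neg x)^2$. At this point I would invoke the characterizing identity $(2y)^2 = 2(y^2)$ of the variety $V(C)$, applied to $y = \neg x$, to obtain $\neg(2x^2) = 2(\neg x)^2$. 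Combining, $x \leq 2x^2$ is equivalent to $\neg x \geq 2(\neg x)^2$, i.e. to $\neg x \in \theta(A)$, which proves the first statement.

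For the ``hence'' part I would argue as follows. Because $\neg$ is an involution, closure of $\theta(A)$ under negation amounts to the biconditional $x \in \theta(A) \iff \neg x \in \theta(A)$. Combining this with the first part, which reads $\neg x \in \theta(A) \iff x \in \theta^*(A)$, gives $\theta(A) = \theta^*(A)$. Conversely, if $\theta(A) = \theta^*(A)$ and $y \in \theta(A)$, then $y \in \theta^*(A)$, so by the first part $\neg y \in \theta(A)$, establishing closure.

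There is no real obstacle in this proof, only one substantive point worth highlighting: the equivalence $x \leq 2x^2 \iff \neg x \geq 2(\neg x)^2$ depends essentially on the $V(C)$-axiom $(2y)^2 = 2(y^2)$ and would fail in the wider variety of all MV-algebras, where $\neg(2x^2) = (2\neg x)^2$ need not coincide with $2(\neg x)^2$. Everything else is a routine De Morgan manipulation together with the involutivity of $\neg$.
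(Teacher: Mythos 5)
Your proof is correct and follows essentially the same route as the paper's: negate the defining inequality $x\leq 2x^2$ and identify $\neg(2x^2)$ with $2(\neg x)^2$. You usefully fill in the step the paper leaves implicit, namely that $\neg(2x^2)=(2\neg x)^2$ by De Morgan and that the further identification with $2(\neg x)^2$ requires the axiom $(2y)^2=2(y^2)$ of $V(C)$ and fails in general MV-algebras.
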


\begin{proof}
$x\in\theta^*(A)$ iff $x\leq 2x^2$, and applying the negation, $\neg x\geq \neg(2x^2)$ iff $\neg x\geq 2(\neg x)^2)$ iff $\neg x\in \theta(A)$.

\end{proof}

\begin{proposition}
For every $A\in V(C)$, $B(A)\subseteq \theta(A)$ (in particular, $0,1\in\theta(A)$) and
 $Rad(A)\subseteq \theta(A)$. Further,  $co-Rad(A)\cap \theta(A)=\{1\}$ and, for every $x \in A$,
$x \in \theta(A)$ if and only if $x$ is a sum of a boolean and an infinitesimal.
\end{proposition}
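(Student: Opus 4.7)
The plan is to handle the four assertions in turn, with the final biconditional being the main piece. For any $b \in B(A)$ we have $b^2 = b$ and $2b = b$, so $2b^2 = b \leq b$ and $b \in \theta(A)$; in particular $0,1 \in \theta(A)$. For $i \in Rad(A)$, the condition $2i \leq \neg i$ reads, in the $\ell$-group $(G,u)$ with $A = \Gamma(G,u)$, as $2i \leq u - i \leq u$, whence $i \odot i = (2i - u) \vee 0 = 0$ and so $2i^2 = 0 \leq i$. For $co$-$Rad(A) \cap \theta(A) = \{1\}$, the inclusion $\supseteq$ is clear; conversely if $x = \neg j$ with $j \in Rad(A)$, then in $(G,u)$ one computes $x \odot x = (2(u-j) - u) \vee 0 = u - 2j$ (using $2j \leq u$) and $2x^2 = (2u - 4j) \wedge u = u$ (using $4j \leq u$), so $x \in \theta(A)$ forces $x \geq u$, i.e., $x = 1$.

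For the biconditional, the natural candidate decomposition is $b := 2x^2$ and $i := x \ominus 2x^2$. For the implication ($\Leftarrow$), assume $x = b' \oplus i'$ with $b' \in B(A)$ and $i' \in Rad(A)$. I would invoke the standard identities $b' \oplus y = b' \vee y$ and $b' \odot y = b' \wedge y$, valid for every Boolean element of an MV-algebra (both reduce, via De Morgan, to $\neg b' \odot \neg y = \neg b' \wedge \neg y$). Rewriting $x = b' \vee i'$ and expanding by distributivity of $\odot$ over $\vee$,
\[
x^2 = (b')^2 \vee (b' \wedge i') \vee (b' \wedge i') \vee (i')^2 = b',
\]
using $(b')^2 = b'$, $(i')^2 = 0$, and $b' \wedge i' \leq b'$. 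Therefore $2x^2 = b'$ and $x \geq b' = 2x^2$, so $x \in \theta(A)$.

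For ($\Rightarrow$), assume $x \in \theta(A)$. The identity $b \oplus (x \ominus b) = x$, valid whenever $b \leq x$, immediately gives $x = b \oplus i$ for $b := 2x^2$ and $i := x \ominus 2x^2$. It remains to show $b \in B(A)$ and $i \in Rad(A)$. Here I would argue by subdirect representation: $A$ embeds subdirectly into a product $\prod_j A_j$ of subdirectly irreducible members of $V(C)$, which are known to be perfect chains $\Delta(G_j)$ with $G_j$ a totally ordered abelian group. A direct computation in $\Delta(G_j)$ shows $\theta(\Delta(G_j)) = Rad(\Delta(G_j)) \cup \{1\}$, so each component $x_j$ is either infinitesimal or equal to $1$; in both cases $(2x^2)_j \in \{0,1\}$ is Boolean in $A_j$, while $(x \ominus 2x^2)_j$ is infinitesimal. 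Since being Boolean is equational ($y \oplus y = y$) and being infinitesimal is captured by the countable system of inequalities $n y \leq \neg y$, both properties pass from the product back to $A$. The main obstacle is precisely this structural input that subdirectly irreducible $V(C)$-algebras are perfect chains $\Delta(G)$; a purely equational derivation of $(2x^2) \oplus (2x^2) = 2x^2$ from $x \geq 2x^2$ and the $V(C)$-axiom $(2x)^2 = 2x^2$ appears much less clean, which is why the subdirect route is preferable.
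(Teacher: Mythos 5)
The paper states this proposition without giving any proof, so there is nothing to compare against line by line; judged on its own, your argument is correct and complete. The first three claims are handled by sound direct computations (for $i\in Rad(A)$ one can even shortcut to $i^2=0$ directly from $i\leq\neg i$, without passing through $2i\leq u$, and the coradical computation $2(\neg j)^2=1$ for $j\in Rad(A)$ is exactly what is needed). For the biconditional, the decomposition $x=2x^2\oplus(x\ominus 2x^2)$ together with the identity $b\oplus(x\ominus b)=b\vee x$ is the natural choice, and the ($\Leftarrow$) direction via $b'\oplus y=b'\vee y$, distributivity of $\odot$ over $\vee$, and $(i')^2=0$ is clean and purely equational. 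The structural input you flag for ($\Rightarrow$) --- that $A$ is a subdirect product of linearly ordered $V(C)$-algebras in which every element of $\theta$ is infinitesimal or $1$ --- is precisely the device the paper itself invokes, without further justification, in the proof of the lemma immediately following this proposition (closure of $\theta(A)$ under $\oplus,\odot,\wedge,\vee$), so your proof is in the spirit of the paper; it is worth noting that the fact that chains in $V(C)$ are perfect follows quickly from the defining identity $(2x)^2=2(x^2)$ (if $x\leq\neg x$ in a chain then $x^2=0$, hence $(2x)^2=0$, and iterating gives $2^nx\leq\neg x$ for all $n$). Your observation that Booleanness is equational and infinitesimality is a conjunction of componentwise inequalities, so both descend from the subdirect product to $A$, correctly closes the argument.
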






We note:

\begin{lemma} If $A\in V(C)$, then $\theta(A)$ is closed under $\oplus,\odot,\wedge,\vee$.
\end{lemma}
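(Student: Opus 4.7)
The plan is to exploit the characterization from the preceding proposition, namely that $x \in \theta(A)$ iff $x = b \oplus i$ for some $b \in B(A)$ and $i \in Rad(A)$. Writing $x = b_1 \oplus i_1$ and $y = b_2 \oplus i_2$, I want to show that for each $\star \in \{\oplus, \odot, \wedge, \vee\}$ the element $x \star y$ is again of the form (Boolean) $\oplus$ (infinitesimal).

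Two auxiliary facts will carry most of the argument. First, $B(A)$ is a sub-MV-algebra of $A$ and $Rad(A)$ is an ideal (closed under $\oplus$ and downward closed), so both $B(A)$ and $Rad(A)$ are closed under all four operations $\oplus, \odot, \wedge, \vee$. Second, for any Boolean $b$ and any $z \in A$ one has $b \oplus z = b \vee z$ and $b \odot z = b \wedge z$; I would justify this by subdirectly embedding $A$ into a product of linearly ordered MV-algebras, in which Booleans are forced to lie in $\{0, 1\}$ and the identities hold trivially componentwise.

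With these in hand, the $\oplus$, $\vee$, $\wedge$ cases reduce to short rearrangements. For $\oplus$: $(b_1 \oplus i_1) \oplus (b_2 \oplus i_2)$ regroups as $(b_1 \oplus b_2) \oplus (i_1 \oplus i_2)$. For $\vee$: the auxiliary identity rewrites $x \vee y$ as $(b_1 \vee b_2) \vee (i_1 \vee i_2)$, which reassembles into the desired form. For $\wedge$: lattice distributivity expands $(b_1 \vee i_1) \wedge (b_2 \vee i_2)$ into four meets, of which only $b_1 \wedge b_2$ is Boolean, while the other three are each dominated by an infinitesimal and so lie in $Rad(A)$; their join is still in $Rad(A)$, and a final appeal to the auxiliary identity places the result in $\theta(A)$.

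The $\odot$ case is the main obstacle, because $\odot$ does not distribute over $\oplus$ in a general MV-algebra. The remedy is precisely the auxiliary identity: replacing each $\oplus$ by $\vee$ turns $x \odot y$ into $(b_1 \vee i_1) \odot (b_2 \vee i_2)$, which by the always-valid distributivity of $\odot$ over $\vee$ unfolds to $(b_1 \odot b_2) \vee (b_1 \odot i_2) \vee (i_1 \odot b_2) \vee (i_1 \odot i_2)$. The first summand is Boolean, the remaining three are bounded above by infinitesimals, and one last application of Boolean $\vee$ infinitesimal $=$ Boolean $\oplus$ infinitesimal repackages this as the required form, placing $x \odot y$ in $\theta(A)$.
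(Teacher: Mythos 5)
Your proof is correct, but it takes a genuinely different route from the paper's. The paper argues componentwise: it invokes the subdirect representation of $A$ as a product of linearly ordered $V(C)$-algebras, observes that in each chain the condition $x_i\geq 2x_i^2$ forces $x_i$ to be an infinitesimal or $1$, and then notes that infinitesimals and $1$ are trivially closed under the four operations in a chain. You instead work globally in $A$ with the decomposition $x=b\oplus i$ ($b$ Boolean, $i$ infinitesimal) from the preceding proposition, together with the absorption identities $b\oplus z=b\vee z$ and $b\odot z=b\wedge z$ for Boolean $b$ and the distributivity of $\odot$ over $\vee$; your handling of the $\odot$ case --- converting $\oplus$ to $\vee$ before distributing, since $\odot$ does not distribute over $\oplus$ --- is the right move and is where a naive attempt would break. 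What each approach buys: the paper's proof is shorter and needs only the defining inequality plus the fact that $V(C)$-chains are perfect, while yours is more explicitly equational, exposes the structure of $\theta(A)$ as generated by $B(A)$ and $Rad(A)$, and confines the appeal to subdirect representation to the two standard absorption identities (which could even be verified purely equationally). The one dependency to flag is that your argument leans on both directions of the characterization ``$x\in\theta(A)$ iff $x$ is a sum of a Boolean and an infinitesimal,'' which the paper states without proof; the paper's own argument does not need that proposition.
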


\begin{proof} Suppose $x\geq 2x^2$ and $y\geq 2y^2$. Since every algebra in $V(C)$ is subdirect product of linearly ordered $V(C)$-algebras $L_i$ for $i$ in an index set $I$,   we can consider the $i$-th component of $x,y$ and we have $x_i\geq 2x_i^2$ and $y_i\geq 2y_i^2$. Then $x_i$ and $y_i$ can only be infinitesimal or $1$. So all their combinations under $\oplus,\odot,\vee,\wedge$ are also infinitesimals or $1$ and they belong to $\theta(L_i)$.
\end{proof}

In general $\theta(A)$ is not closed under negation. Indeed, suppose $\theta(A)$ is closed under negation. Then infinitesimals cannot exist in $A$, otherwise infinitesimals would exist in $\theta(A)$ and, by negation, coinfinitesimals would exist in $\theta(A)$. But coinfinitesimals never satisfy $x\geq 2x^2$ unless $x=1$. So $A$ is a semisimple MV-algebra in $V(C)$, hence $A$ is a Boolean algebra and $A=\theta(A)$, in particular $\theta(A)$ is an MV algebra.

\begin{definition}
 An $\ell$-Bisemiring an algebra $(S, \wedge, \vee, \odot, \oplus, 0, 1)$ such that:

\begin{enumerate}
\item $(S,\wedge,\vee,0,1)$ is a bounded distributive lattice;
\item $(S,\wedge,0,1,\oplus)$ is an idempotent commutative semiring;
\item $(S,\vee,0,1,\odot)$ is an idempotent commutative semiring.
\end{enumerate}
\end{definition}
 Clearly, every MV-algebra is an $\ell$-Bisemiring.

 We denote by $\ell BS$ the category of \emph{$\ell$-Bisemirings} and their morphisms.

 \begin{theorem} For every $A\in V(C)$, the structure $\theta(A)$ belongs to $\ell BS$ and the map sending $A$ to $\theta(A)$ is  functor from $V(C)$ to $\ell BS$. The same holds for $\theta^*$.
 \end{theorem}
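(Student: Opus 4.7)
My plan is to observe that the theorem is essentially a corollary of the closure properties already established, combined with the equational character of the $\ell$-Bisemiring axioms.

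First, I would note that every MV-algebra, in particular $A$, is an $\ell$-Bisemiring (as remarked after the definition), and that the defining axioms of an $\ell$-Bisemiring, namely the bounded distributive lattice identities for $(S,\wedge,\vee,0,1)$ together with the two sets of idempotent commutative semiring identities for $(S,\wedge,0,1,\oplus)$ and $(S,\vee,0,1,\odot)$, are \emph{equational} in the signature $\{\wedge,\vee,\oplus,\odot,0,1\}$. Hence any subset of $A$ closed under these four binary operations and containing the constants $0,1$ is automatically an $\ell$-Bisemiring. The preceding lemma gives closure of $\theta(A)$ under $\oplus,\odot,\wedge,\vee$, and the earlier proposition gives $B(A)\subseteq\theta(A)$, so in particular $0,1\in\theta(A)$. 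This reduces the first part of the theorem to pure inheritance.

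Next, for functoriality, I would take any MV-morphism $f\colon A\to B$ between $V(C)$-algebras. Since $f$ preserves $\oplus$ and $\odot$ and is order-preserving (as every MV-homomorphism preserves the lattice order), from $x\geq 2x^2$ we obtain $f(x)\geq 2f(x)^2$; hence $f$ restricts to a map $\theta(f)\colon\theta(A)\to\theta(B)$. Because $f$ also preserves $\wedge,\vee,0,1$, this restriction is an $\ell BS$-morphism. Preservation of identities and of composition is automatic for set-theoretic restrictions, so $\theta\colon V(C)\to\ell BS$ is indeed a functor.

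Finally, for $\theta^*$, I would exploit the identity $\theta^*(A)=\{\neg x\mid x\in\theta(A)\}$ from the earlier proposition. Closure of $\theta^*(A)$ under the bisemiring operations then follows by conjugating with $\neg$ and using closure of $\theta(A)$: for $x,y\in\theta^*(A)$ we have $\neg x,\neg y\in\theta(A)$, hence $\neg x\oplus\neg y\in\theta(A)$, which gives $x\odot y=\neg(\neg x\oplus\neg y)\in\theta^*(A)$; dually $x\oplus y\in\theta^*(A)$, and the De Morgan laws $\neg(x\wedge y)=\neg x\vee\neg y$, $\neg(x\vee y)=\neg x\wedge\neg y$ handle the lattice operations. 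Membership of $0,1$ is immediate from $0\leq 2\cdot 0^2$ and $1\leq 2\cdot 1^2$. The same restriction argument as before then upgrades $\theta^*$ to a functor $V(C)\to\ell BS$. The only real subtlety, and hardly an obstacle, is making explicit that the $\ell$-Bisemiring axioms are equational and therefore transfer to substructures, sparing us from reverifying distributivity and unit laws inside $\theta(A)$ and $\theta^*(A)$.
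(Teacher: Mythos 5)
Your proposal is correct and follows essentially the same route as the paper: the paper also obtains the $\ell BS$ structure from the closure lemma and gets functoriality by restricting homomorphisms, observing that $\theta(A)$ is the zeroset of the MV-polynomial $2x^2\ominus x$ (which is just the equational form of your order-preservation argument, since $x\geq 2x^2$ iff $2x^2\ominus x=0$). You merely spell out more explicitly what the paper leaves implicit, namely that the $\ell$-Bisemiring axioms are equational and hence inherited, and that $\theta^*$ is handled by negation duality where the paper only says ``a similar proof holds.''
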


 \begin{proof}
 In fact,  if $h:A\to B$ is a homomorphism of MV-algebras in $V(C)$, then $h$ sends $\theta(A)$ in $\theta(B)$ because $\theta(A)$ is the zeroset of an MV-algebraic polynomial, namely, $2x^2\ominus x$. So we can associate to $h$ the map $\theta(h):\theta(A)\to\theta(B)$ obtained by restricting $h$.

 More precisely, the functor $\theta$ goes from the category $V(C)$ seen as a category of MV-algebras, to a category of structures $(S,\wedge,\vee,\oplus,\odot)$ with four binary operations (the first two are distributive lattice operations).  Note that any homomorphism of MV-algebras  preserves the four operators.

 A similar proof holds for $\theta^*$.

 \end{proof}

 In a sense, negation gives a duality between the functors $\theta$ and $\theta^*$.

 For $A\in V(C)$ we note that an element $x\in\theta(A)$ is boolean in $A$ if and only if $x^2=x$, and $x$ is infinitesimal in $A$ if and only if $x^2=0$. This means that booleans and infinitesimals are definable internally in $\theta(A)$, since multiplication is available in $\theta(A)$.
We begin with a lemma which allows us to put together a boolean algebra and a perfect algebra.

 \begin{lemma} Let $B$ be a boolean algebra and $P$ a perfect MV-algebra. There is an MV-algebra $A\in V(C)$ such that $B(A)=B$ and $Rad(A)=Rad(P)$.
 \end{lemma}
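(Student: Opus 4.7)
My plan is to realise $A$ as a fibered product of $B$ and $P$ over their common Boolean quotient $\{0,1\}$. Since $P$ is perfect, the natural map $\beta\colon P\to\{0,1\}$ that sends $Rad(P)$ to $0$ and $co-Rad(P)$ to $1$ is an MV-algebra homomorphism with kernel $Rad(P)$. Assuming $B$ is non-degenerate (the degenerate case being vacuous), I would pick any ultrafilter $\phi\colon B\to\{0,1\}$ and define
\[
A=\{(b,p)\in B\times P : \phi(b)=\beta(p)\}.
\]
Because $\phi$ and $\beta$ are MV-morphisms, $A$ is closed under $\oplus$ and $\neg$, hence is a subalgebra of $B\times P$; since $B$ and $P$ both lie in $V(C)$, so does $A$, as varieties are closed under products and subalgebras.

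Next I would verify the two required identifications. An element $(b,p)\in A$ is Boolean precisely when $p\in B(P)=\{0,1\}$, every element of $B$ already being Boolean; the constraint $\phi(b)=p$ then pins down $p=\phi(b)$, so the first projection identifies $B(A)$ with $B$ via $b\mapsto(b,\phi(b))$. Similarly, $(b,p)\in A$ is infinitesimal exactly when both coordinates are, forcing $b=0$ (as $B$ has no non-zero infinitesimals) and $p\in Rad(P)$; compatibility $\phi(0)=0=\beta(p)$ is then automatic, so the second projection gives $Rad(A)\cong Rad(P)$.

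The subtle point is the choice of subalgebra: the naive product $B\times P$ has Boolean part $B\times\{0,1\}$, which is too large, while its radical $\{0\}\times Rad(P)$ is already correct. The pullback condition collapses the two redundant copies of $\{0,1\}$ into one, yielding the desired $B(A)=B$ without disturbing the radical. I do not anticipate a serious obstacle, as the verification reduces to the compatibility of Boolean parts and radicals with finite products and subalgebras, together with the existence of at least one ultrafilter on a non-trivial Boolean algebra.
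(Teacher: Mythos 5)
Your proof is correct and rests on the same underlying idea as the paper's --- realize $A$ as a subalgebra of a product with $P$ cut out by a compatibility condition taken modulo $Rad(P)$ --- but the pullback is set up differently. The paper first invokes Stone's theorem to embed $B$ into a Boolean algebra of the form $B'\times\{0,1\}$ and then puts $A=\{(b,p)\in B'\times P \mid (b,\, p \bmod Rad(P))\in B\}$, i.e.\ the preimage of $B$ under the quotient map $B'\times P\to B'\times\{0,1\}$. You instead pull back directly over the two-element algebra, taking $A=\{(b,p)\in B\times P \mid \phi(b)=\beta(p)\}$ with an ultrafilter $\phi$ on $B$ on one side and the radical quotient $\beta$ on the other. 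Your version is more economical: there is no auxiliary $B'$, the set $A$ is a subalgebra because it is the equalizer of two homomorphisms into $\{0,1\}$, and the identifications $B(A)\cong B$ (via $b\mapsto(b,\phi(b))$, since $B(P)=\{0,1\}$ for perfect $P$) and $Rad(A)=\{0\}\times Rad(P)\cong Rad(P)$ follow from the componentwise description of idempotents and infinitesimals in a product, exactly as you compute. The price is the ultrafilter lemma in place of Stone representation, which is no real loss. Both constructions share the caveat you correctly flag: a non-degenerate $B$ is needed (the lemma itself fails for the one-element $B$ when $Rad(P)\neq\{0\}$), and in both cases the conclusion $B(A)=B$ is to be read up to isomorphism.
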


 \begin{proof} We perform the following detour. Let $B'$ be a boolean algebra such that $B\subseteq B'\times\{0,1\}$ ($B'$ exists by Stone Theorem on Boolean algebras). Consider $A=\{(b,p)\in B'\times P|(b,p\ mod\ Rad(P))\in B\}$.

 Note that $A\in V(C)$. Moreover,  $B(A)=B$ and $Rad(A)$ is the set of the elements $(0,p) $ with $p\in Rad(P)$. So $Rad(A)$ is isomorphic to $Rad(P)$.

 \end{proof}

 Now we have the desired characterization of $\theta$ of MV-algebras in $V(C)$:

 \begin{lemma}\label{lemma:nablageom} Let $S$ be an $\ell$-Bisemiring.  Then $S=\theta(A)$ for some $A\in V(C)$ if and only if:
\begin{itemize} \item $(Inf(S),\oplus,\odot,\wedge,\vee)$ is the radical of an MV-algebra, where $Inf(S)$ is the set of $x\in S$ such that $x^2=0$ (infinitesimal elements).
\item $(BOOL(S),\oplus,\odot)$ is a Boolean algebra, where $\vee=\oplus$ and $\wedge=\odot$ and $Bool(S)$ is the set of $x$ such that $x^2=x$ (Boolean elements).

\end{itemize}
\end{lemma}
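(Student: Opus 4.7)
The plan is to split the biconditional into forward and backward implications and exploit the previous lemma for the backward (construction) direction.

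For the forward implication, suppose $S = \theta(A)$ for some $A \in V(C)$. I would use the observation recorded just before the lemma: an element $x \in \theta(A)$ is Boolean in $A$ iff $x^2 = x$, and infinitesimal in $A$ iff $x^2 = 0$. This immediately identifies $\mathrm{Bool}(S) = B(A)$ and $\mathrm{Inf}(S) = \mathrm{Rad}(A)$ as subsets. The first set is therefore a Boolean algebra, on which $\oplus$ and $\odot$ reduce to $\vee$ and $\wedge$ respectively (a standard fact about Boolean elements in any MV-algebra). The second set is, by construction, the radical of the MV-algebra $A$, and thus carries the desired $(\oplus,\odot,\wedge,\vee)$-structure.

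For the backward implication, assume $S$ satisfies the two conditions. Let $B = \mathrm{Bool}(S)$, viewed as a Boolean algebra, and let $P$ be a perfect MV-algebra whose radical equals $\mathrm{Inf}(S)$; such a $P$ exists because, by hypothesis, $\mathrm{Inf}(S)$ is the radical of some MV-algebra, and the functor $\Delta$ from Section~2 turns the associated abelian $\ell$-group into a perfect MV-algebra with exactly that radical. Apply the previous lemma to obtain $A \in V(C)$ with $B(A) \cong B$ and $\mathrm{Rad}(A) \cong \mathrm{Rad}(P) = \mathrm{Inf}(S)$. By the characterization recalled above, every element of $\theta(A)$ is of the form $b \oplus i$ with $b \in B(A)$ and $i \in \mathrm{Rad}(A)$, and the $\ell$-Bisemiring operations on $\theta(A)$ are completely determined by: (i) the Boolean structure on $B(A)$, (ii) the radical structure on $\mathrm{Rad}(A)$, and (iii) the canonical Boolean-times-infinitesimal action (which on each linear factor is just ``select or annihilate'' the infinitesimal part). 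I would then define $\phi : S \to \theta(A)$ by decomposing each $x \in S$ as $b \oplus i$ with $b \in \mathrm{Bool}(S)$ and $i \in \mathrm{Inf}(S)$ and sending this to the corresponding $b \oplus i \in \theta(A)$, and verify that $\phi$ is an isomorphism of $\ell$-Bisemirings.

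The main obstacle, and the step requiring the most care, is establishing that every $x \in S$ admits such a decomposition $x = b \oplus i$ with $b \in \mathrm{Bool}(S)$ and $i \in \mathrm{Inf}(S)$, together with uniqueness. Uniqueness should follow from the fact that Boolean elements and infinitesimals intersect only at $0$ and are detected by the polynomial identities $x^2 = x$ and $x^2 = 0$; existence has to be coaxed out of the $\ell$-Bisemiring axioms combined with the two hypotheses, presumably by producing the Boolean part of $x$ as an internally definable element (for instance via the greatest Boolean element below $2x^2$ or similar) and then recovering $i$ as an appropriate difference inside the radical. Once decomposition is in hand, checking that the four operations $\oplus,\odot,\wedge,\vee$ transfer correctly through $\phi$ reduces to routine case analysis on Boolean versus infinitesimal summands, using that $\theta(A)$ and $S$ share the same two substructures.
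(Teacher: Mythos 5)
The paper states this lemma without giving any proof, so there is no ``paper's argument'' to compare against; I will assess your proposal on its own terms. Your forward direction is correct and essentially complete: by the proposition in Section~3 one has $B(A)\subseteq\theta(A)$ and $Rad(A)\subseteq\theta(A)$, and the remark immediately preceding the lemma shows that the equations $x^2=x$ and $x^2=0$ carve out exactly these two subsets of $\theta(A)$, so $BOOL(S)=B(A)$ is a Boolean algebra (on which $\oplus$ and $\odot$ coincide with $\vee$ and $\wedge$, as for idempotent elements of any MV-algebra) and $Inf(S)=Rad(A)$ is literally the radical of an MV-algebra.

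The backward direction, however, has a genuine gap which you name but do not close, and it is the entire content of that implication. The two hypotheses constrain only the subsets $Inf(S)$ and $BOOL(S)$; they say nothing about (a) whether every element of $S$ decomposes as $b\oplus i$ with $b\in BOOL(S)$ and $i\in Inf(S)$, nor about (b) how the four operations of $S$ behave on mixed pairs consisting of one Boolean and one infinitesimal element. Your use of the preceding construction lemma to manufacture $A\in V(C)$ with $B(A)\cong BOOL(S)$ and $Rad(A)\cong Inf(S)$ is the right move, but it only guarantees that $\theta(A)$ and $S$ have isomorphic Boolean parts and isomorphic infinitesimal parts; to conclude $S\cong\theta(A)$ you must prove that $\phi$ is totally defined (existence of the decomposition) and that it preserves the operations on cross-terms, and neither claim is derived from the $\ell$-Bisemiring axioms plus the stated hypotheses. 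Writing that existence ``has to be coaxed out\dots presumably by producing the Boolean part of $x$ as an internally definable element'' is a statement of intent, not a proof; in an abstract $\ell$-Bisemiring there is no subtraction and no obvious reason why a greatest Boolean element below $x$ exists or why the residual lies in $Inf(S)$. As it stands the lemma likely needs an additional hypothesis (for instance that $S$ is generated by, or equals, $\{b\oplus i : b\in BOOL(S),\ i\in Inf(S)\}$, together with compatibility conditions on the mixed operations), or else a substantive argument that the $\ell$-Bisemiring axioms force this; your proposal supplies neither.
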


In particular, if $S=\theta(A)$ for a perfect MV-algebra $A$, then $BOOL(S)=\{0,1\}$.

\subsection{Tropicalization of Perfect MV-algebras}\label{sect:perf}

In this section we continue to describe the algebraic aspects of the algebraic varieties $\theta(A)$ in the cases where $A$ is a Perfect MV-algebra. Perfect MV-algebras are a full subcategory of $V(C)$. We remark that they Perfect MV-algebras do not form an equational class.

We explore the role of $\theta$ in composition with other functors acting in tropical geometry, and more, its  role in moving from $V(C)$-algebraic varieties into tropical algebraic varieties.
\begin{definition}
Let $TOP$ be the category of positive cones of $\ell$-groups with a top, where morphisms preserve the positive cone and the top,
$PERF$ be the category of perfect MV-algebras and
$\ell \cal G$ be the category of abelian $\ell$-groups.
\end{definition}
For any perfect MV-algebra $P$, $\theta(P) = Rad(P)\cup \{1\}$ and $Rad(P)$ is the positive cone of the $\ell$-group $\Delta^{-1}(P)$. On the other hand,  the $\Delta$ functor maps every  abelian $\ell$-group $G$ in a perfect MV-algebra $\Delta(G)$.
We can extend such correspondences to equivalences of categories:
\begin{theorem} $\theta$ is an equivalence functor between $PERF$ and $TOP$.
$\theta\circ\Delta$ is a categorial equivalence from $\ell \cal G$ to $TOP$.
\end{theorem}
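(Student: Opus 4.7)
The strategy is to factor $\theta|_{PERF}$ through the already-established equivalence $\Delta:\ell\mathcal{G}\to PERF$ of Section~2, so that $\theta$ becomes, up to canonical identification, the composite of $\Delta^{-1}$ with the ``positive cone plus formal top'' construction on $\ell$-groups. Once this is in place, the second statement drops out for free as the composition of two equivalences.

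First I would verify that $\theta$ genuinely lands in $TOP$ when restricted to $PERF$. For a perfect MV-algebra $P$, write $P=Rad(P)\cup co\text{-}Rad(P)$. Every infinitesimal $x$ satisfies $x^2=0$ (in a perfect MV-algebra the product of two infinitesimals is already $0$), so $2x^2=0\le x$ and $Rad(P)\subseteq\theta(P)$; on the coradical the inequality $x\ge 2x^2$ collapses to $x=1$. Hence $\theta(P)=Rad(P)\cup\{1\}$, confirming the statement already highlighted before the theorem. Setting $G=\Delta^{-1}(P)$, the bijection $Rad(P)\to G^+$ sending $(0,g)\mapsto g$ transports $\oplus,\wedge,\vee$ to $+,\wedge,\vee$, while $1$ plays the role of a formal top. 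Because any MV-morphism preserves $0$ and $1$ and sends infinitesimals to infinitesimals, $\theta$ on morphisms restricts to maps preserving positive cone and top, so $\theta$ is a well-defined functor $PERF\to TOP$.

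Next I would construct a quasi-inverse $F\colon TOP\to PERF$. Given $(G^+\cup\{\top\})\in TOP$, the positive cone $G^+$ determines an abelian $\ell$-group $G=G^+-G^+$ uniquely up to isomorphism, and I set $F(G^+\cup\{\top\})=\Delta(G)$. On a $TOP$-morphism $f$, one forgets the top, extends the induced $\ell$-morphism of positive cones to the unique $\ell$-group morphism $G\to H$, and then applies $\Delta$ to obtain an MV-morphism $\Delta(G)\to\Delta(H)$. The natural isomorphisms $F\theta\simeq\mathrm{id}_{PERF}$ and $\theta F\simeq\mathrm{id}_{TOP}$ then reduce to the identifications $\theta(\Delta(G))\cong G^+\cup\{\top\}$ and $\Delta(\Delta^{-1}(P))\cong P$ already noted, combined with the fact that $\Delta$ is an equivalence.

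The main obstacle, I expect, is verifying fullness on the nose: that every $TOP$-morphism $\theta(P)\to\theta(P')$ lifts to an MV-morphism $P\to P'$. Faithfulness is automatic, since $P=Rad(P)\cup co\text{-}Rad(P)$ and MV-morphisms commute with $\neg$, so any morphism is determined by its restriction to $Rad(P)\cup\{1\}=\theta(P)$. Fullness rests on the classical fact that an $\ell$-morphism of positive cones extends uniquely to an $\ell$-group morphism; once this extension is available, $\Delta$ transports it back to an MV-morphism. For the second assertion, $\theta\circ\Delta\colon\ell\mathcal{G}\to PERF\to TOP$ is the composition of the Di Nola--Lettieri equivalence $\Delta$ and the equivalence $\theta$ just established, hence itself an equivalence from $\ell\mathcal{G}$ to $TOP$.
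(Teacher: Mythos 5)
Your proposal is correct and follows exactly the route the paper intends: the paper states the theorem without a proof, but the paragraph immediately preceding it records precisely the identifications you use, namely $\theta(P)=Rad(P)\cup\{1\}$ with $Rad(P)$ the positive cone of $\Delta^{-1}(P)$, so that $\theta|_{PERF}$ is $\Delta^{-1}$ followed by the ``positive cone with a top'' construction and the second claim follows by composing equivalences. Your filling in of the details (infinitesimals satisfy $x^2=0$, the coradical meets $\theta(P)$ only in $1$, faithfulness via closure under negation, fullness via the unique extension of cone morphisms to $\ell$-group morphisms) is a sound and welcome elaboration of what the paper leaves implicit.
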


If we specialize from $V(C)$ to $Perf_\QQ$ we obtain more:

\begin{corollary} The map $\Theta_\QQ$ sending $A\in Perf_\QQ$ to $\Theta(A)$ is an equivalence of categories  from $Perf_\QQ$ to a full subcategory of $C\ell BS$.
\end{corollary}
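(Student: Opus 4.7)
The plan is to obtain this corollary as a direct specialization of the preceding theorem, which gave an equivalence $\theta\colon PERF\to TOP$. Writing $Perf_\QQ$ for the full subcategory of $PERF$ whose objects are those perfect MV-algebras $A$ such that $\Delta^{-1}(A)$ is a divisible (equivalently, $\QQ$-vector lattice) abelian $\ell$-group, I first restrict $\theta$ to $Perf_\QQ$. Since $Perf_\QQ$ is a full subcategory of $PERF$ and $\theta$ is fully faithful on $PERF$, the restriction $\Theta_\QQ$ is automatically fully faithful onto its image, so the only real content of the corollary is the identification of the image as a full subcategory of $C\ell BS$.

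Next I would show that $\Theta_\QQ(A)$ lies in $C\ell BS$ for every $A\in Perf_\QQ$. By Lemma~\ref{lemma:nablageom}, $\theta(A)$ is an $\ell$-Bisemiring with boolean part $\{0,1\}$ and infinitesimal part $Rad(A)$, and under the equivalence of Theorem \ref{thm:perf} the multiplication $\oplus$ on $Rad(A)$ corresponds to the group operation on the positive cone of $\Delta^{-1}(A)$. When $\Delta^{-1}(A)$ is a $\QQ$-vector lattice, this positive cone is divisible and cancellative, so $\theta(A)$ inherits the extra structural property (divisibility / $n$-th roots, which I read as the ``$C$'' in $C\ell BS$) required of objects of $C\ell BS$. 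Verifying that MV-algebra morphisms restrict to $C\ell BS$-morphisms is routine since they automatically preserve $\oplus,\odot,\wedge,\vee$ and hence the derived $n$-th root structure.

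Finally, to confirm that the image is a \emph{full} subcategory of $C\ell BS$, I would identify the essential image as the full subcategory of those $S\in C\ell BS$ of the form $G^+\cup\{1\}$ where $G$ is a divisible abelian $\ell$-group, and invoke fullness of $\theta$ on $PERF$: any $C\ell BS$-morphism between two such $\theta(A),\theta(B)$ is in particular an $\ell$-Bisemiring morphism, hence arises from a unique MV-morphism $A\to B$ by the previous theorem, and this MV-morphism automatically preserves divisibility, hence is a morphism in $Perf_\QQ$.

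The main obstacle I anticipate is nothing technical in the category-theoretic machinery, which is essentially bookkeeping on top of the already-proven equivalence $\theta\colon PERF\to TOP$, but rather the interpretive step of making precise what $C\ell BS$ and $Perf_\QQ$ denote in the paper's notation: the corollary is meaningful exactly once one fixes that ``$C$'' refers to the divisibility/root-closure condition matching $\QQ$-divisibility of the associated $\ell$-group, so that $\Theta_\QQ$ factors through $C\ell BS$ rather than merely through $\ell BS$. Once that definitional matching is in place, the proof is a one-line specialization of the previous theorem.
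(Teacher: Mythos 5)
Your proposal takes essentially the same route as the paper: the paper's entire proof consists of declaring $\mathcal{R}$ to be the full subcategory of $C\ell BS$ on the objects $\Theta(A)$ for $A\in Perf_\QQ$ and asserting that $\Theta_\QQ$ is full and faithful onto $\mathcal{R}$, which is exactly your ``restrict the equivalence $\theta\colon PERF\to TOP$ and corestrict to the full image'' argument. Since that formal skeleton works for any full subcategory of $PERF$, your proof is correct where the paper's is.

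One caveat on the interpretive layer you added: the paper never defines $C\ell BS$ or $Perf_\QQ$ at this point, but in Section~4 it sets $Perf^\QQ:=\Delta^\QQ(Ab^\QQ)$ with $Ab^\QQ$ the abelian $\ell$-subgroups of $\QQ$; these (e.g.\ $\ZZ$) need not be divisible, so your reading of $Perf_\QQ$ as the perfect MV-algebras over $\QQ$-vector lattices, and of the ``$C$'' as a root-closure condition, is probably not what the authors intend. This does not affect the categorical argument, which transfers verbatim to the subgroups-of-$\QQ$ reading, but the divisibility verification you sketch is aimed at a guessed definition rather than the paper's.
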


\begin{proof} Let $\mathcal{R}$ be the full subcategory of $C\ell BS$ consisting of $\Theta(A)$ for $A\in Perf_\QQ$. $\Theta_\QQ$ is a full and faithful functor from
$Perf_\QQ$ to $\mathcal{R}$.
\end{proof}

If $G$ is an abelian $\ell$-group, we define its tropicalization
$Trop(G)$ as the additively idempotent semifield obtained by adding a point $-\infty$ to $G$. The tropicalization can be considered as a functor from $\ell \cal G$ to the category of idempotent semirings. The inverse functor takes an additively idempotent semifield $S$ and maps it to the abelian $\ell$-group $Detrop(S)$ obtained by deleting the zero of $S$.

So, given an additively idempotent semifield $S$, we can define the functor $\mathbf{F}$ by  a triple composition of functors

$$\mathbf{F}(S)=\theta(\Delta(Detrop(S))),$$

\begin{theorem} $\mathbf{F}$ is an equivalence from the category of additively idempotent semifields to $TOP$.
\end{theorem}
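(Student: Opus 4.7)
The plan is to recognize that $\mathbf{F}$ is defined as the composition of three functors, each of which has already been established (earlier in the paper or in the cited literature) to be an equivalence of categories, and then to invoke the standard fact that a composition of equivalences of categories is again an equivalence.

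First, I would unpack the three constituent functors and record why each is an equivalence. The functor $Detrop$, going from additively idempotent semifields to abelian $\ell$-groups, is by construction inverse to $Trop$: it simply removes the adjoined $-\infty$ and restricts the semifield multiplication to the $\ell$-group operation, while $Trop$ reattaches it. This is just the specialization to semifields of Rump's equivalence (Theorem \ref{thm:rump}), so it is an equivalence between the category of additively idempotent semifields and $\ell\mathcal{G}$. Next, the functor $\Delta$ from \cite{DL94}, defined by $\Delta(G)=\Gamma(\mathbb{Z}\lex G,(1,0))$, is an equivalence from $\ell\mathcal{G}$ to $PERF$. Finally, the previous theorem in this section establishes that $\theta$ is an equivalence from $PERF$ to $TOP$, with $\theta(\Delta(G))=Rad(\Delta(G))\cup\{1\}$ recovering the positive cone of $G$ with a top adjoined.

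Second, I would assemble the composition. Given an additively idempotent semifield $S$, the chain
\[
S \;\xrightarrow{Detrop}\; G \;\xrightarrow{\Delta}\; P \;\xrightarrow{\theta}\; Rad(P)\cup\{1\}
\]
produces exactly the positive cone of $G$ together with a top element, which is an object of $TOP$. On morphisms, $\mathbf{F}$ acts by the compatible restrictions: $Detrop$ on arrows simply removes the action on $0$, $\Delta$ on arrows is given by the Mundici/$\Delta$ construction, and $\theta$ on arrows is the restriction to the zeroset of the MV-polynomial $2x^2\ominus x$. Since each of these is already known to be fully faithful and essentially surjective, the composite is too, and therefore $\mathbf{F}$ is an equivalence of categories.

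The only genuine content beyond citation is checking that the images and morphism actions line up across the three steps, and I would do this by tracing a generic object: if $S$ is an additively idempotent semifield, then $Detrop(S)$ is an arbitrary abelian $\ell$-group $G$, $\Delta(G)$ is the corresponding perfect MV-algebra, and $\theta$ applied to it gives $Rad(\Delta(G))\cup\{1\}$, which matches the description of $TOP$ (positive cone of an $\ell$-group with a top). I do not anticipate a serious obstacle, since every delicate point (in particular the interaction between $\theta$ and $\Delta$) is covered by the preceding theorem $\theta\circ\Delta:\ell\mathcal{G}\to TOP$ being an equivalence; composing this with $Detrop$ then yields the result directly.
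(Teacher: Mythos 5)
Your proposal is correct and follows exactly the route the paper intends: the paper states the theorem without an explicit proof, relying on the reader to see that $\mathbf{F}$ is the composite of Rump's equivalence $Detrop$ (Theorem \ref{thm:rump}), the equivalence $\Delta$ of \cite{DL94}, and the equivalence $\theta\colon PERF\to TOP$ established in the preceding theorem, so that $\mathbf{F}$ is an equivalence as a composition of equivalences. Your additional check that the objects and morphism actions line up across the three steps is exactly the (routine) content the paper leaves implicit.
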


So the functor $\mathbf{F}$ is  an equivalence, and gives a direct link between tropical structures ( additively idempotent semifields) and our $\theta$ structures over Perfect MV-algebras. The construction is functorial.

So, the above theorem build up a \emph{functorial} \emph{tropicalization} of algebraic structures related with {\L}ukasiewicz logic. Indeed, we see the elements of $TOP$ as $\emph{tropicalizations}$ of perfect MV-algebras. It remains to tropicalize in a reasonable sense more general MV-algebras.

\section{MV-algebras and the topos $\widehat{\NN^\times}$}

We are inspired by \cite{CC} where the authors apply topos theory and tropical geometry for a possible road to Riemann Hypothesis. The hope is that, as schemes opened the way for Fermat's theorem, topoi open the way for Riemann hypothesis.

We recall the reader that a {\em topos} is a mathematical structure that generalizes the concept of a space, extending the ideas of set theory and logic into a categorical framework. Originating from Grothendieck's work in algebraic geometry, topoi have found applications in diverse fields such as logic, computer science, and topology. At its core, a topos can be viewed as a category that behaves like the category of sets, satisfying specific axioms that make it a generalization of a ``universe of discourse."

One of the simplest examples of a topos is a {\em presheaf category}, denoted as \(\text{Set}^{\mathcal{C}^{\text{op}}}\), where \(\mathcal{C}\) is a small category. Objects in this category are functors from \(\mathcal{C}^{\text{op}}\) to the category of sets, and morphisms are natural transformations between these functors. A presheaf hence assigns sets to objects in \(\mathcal{C}\) and specifies how these sets transform along morphisms of \(\mathcal{C}\).

A key feature of topoi is the concept of {\em points}, which abstractly generalize the idea of geometric points in a space. A point of a topos \(\mathcal{E}\) is defined as a geometric morphism \(\mathsf{Set} \to \mathcal{E}\), where \(\mathsf{Set}\) is the category of sets. Such a morphism consists of an adjoint pair of functors that preserve the essential structure of the topos. Points play a significant role in connecting the categorical structure of a topos to more classical notions of geometry and logic.

In summary, topoi unify geometric and logical perspectives, presheaf categories provide an intuitive entry point into their study, and points of a topos link them to tangible, classical ideas. Together, they form a rich and versatile framework for exploring abstract and applied mathematics.

In \cite{CC} we found the crucial construction from which we start our investigation:
\begin{definition}
We denote by $\NN^\times$ the monoid of integers with multiplication and by
$\widehat{\NN^\times}$ its presheaf category of the functors from $\NN^\times$, seen as a category, to $Set$.
\end{definition}

The points of the topos $\widehat{\NN^\times}$ form a category which is studied in \cite{CC}.

We aim to give a MV-algebraic characterization of the points of the presheaf topos $\widehat{\NN^\times}$ and to investigate the logical consequences of this characterization.


Recall that by $\Delta$ we denote the categorical equivalence between abelian $\ell$-groups and perfect
 MV-algebras, see (\cite {DL94}). Let us call $\Delta^\QQ$ the restriction of the functor $\Delta$ to $Ab^\QQ$, i.e., the category of abelian $\ell$-subgroups of $\QQ$. Then we set $Perf^\QQ:=\Delta^\QQ(Ab^\QQ)$. Likewise we can consider  the restriction $\theta^\QQ$ of $\theta$ to the range of $\Delta^\QQ$ and $TOP^\QQ$ the image of $\Delta^\QQ$.

We start by investigating the first order theory of the non trivial subgroups of the rationals as ordered groups. Then via the $\Delta^\QQ$ functor we can transfer the obtained results to the first order theory of $Perf^\QQ$ and to points of the topos $\widehat{\NN^\times}$.

We can consider the logic $T_\QQ$ which is the first order theory of all ordered subgroups of $\QQ$ and $T_{\Delta(\QQ)}$ the first order theory of $Perf^\QQ$. Then we have:

\begin{theorem}\label{thm:tq}  The first order theory $T_\QQ$  is recursively axiomatizable.
\end{theorem}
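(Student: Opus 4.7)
The plan is to prove the stronger statement that $T_\QQ$ is in fact \emph{decidable}, whence recursive axiomatizability follows at once (take the set of theorems as its own recursive axiomatization).

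First I would classify the class of models. Every non-trivial ordered subgroup of $\QQ$ is isomorphic, as an ordered abelian group, to one of the form
$$G_\pi := \{a/b \in \QQ : v_p(b) \leq \pi(p) \text{ for every prime } p\},$$
indexed by a ``supernatural exponent'' $\pi: P \to \NN \cup \{\infty\}$ where $P$ is the set of primes. Consequently $T_\QQ = \bigcap_\pi Th(G_\pi)$. Each single theory $Th(G_\pi)$ is decidable via the classical quantifier-elimination result for ordered abelian groups in the language enriched by the divisibility predicates $D_n(x)\Leftrightarrow\exists y\,(ny=x)$, and the procedure is uniform in $\pi$: the truth value of $D_n$ on $G_\pi$ is effectively determined by the restriction of $\pi$ to the prime divisors of $n$.

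Next I would reduce the infinite conjunction over $\pi$ to a finite check. After quantifier elimination, any sentence $\phi$ is equivalent to a Boolean combination of order atoms and divisibility atoms $D_{n_i}$ with $n_i \leq N(\phi)$ for some bound $N(\phi)$ computable from $\phi$. Only the primes $p \leq N(\phi)$ are relevant, and for each such $p$ only the question ``is $\pi(p) \geq M(\phi)$?'' matters for some threshold $M(\phi)$, because once $\pi(p)$ exceeds the largest power of $p$ dividing any $n_i$, the group $G_\pi$ becomes indistinguishable from a fully $p$-divisible one at the level of $\phi$. This produces finitely many canonical profiles $\pi_1,\dots,\pi_r$, effectively enumerable from $\phi$, and
$$\phi \in T_\QQ \iff G_{\pi_j}\models\phi \text{ for every } j=1,\dots,r,$$
a decidable condition by the uniform decidability of step one.

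The main technical obstacle, which I would treat most carefully in the write-up, is this uniform bound reducing infinitely many supernatural profiles to finitely many canonical ones; it amounts to making precise, uniformly in $\phi$, the fact that sufficiently deep $p$-divisibility is logically indistinguishable from full $p$-divisibility of height $\infty$. Modulo this quantifier-elimination-style argument, decidability of $T_\QQ$ is immediate, and therefore so is recursive axiomatizability.
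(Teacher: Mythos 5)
Your overall strategy --- classify the nontrivial ordered subgroups of $\QQ$ up to isomorphism by characteristics $\pi$, decide each $Th(G_\pi)$ uniformly, and reduce the intersection $\bigcap_\pi Th(G_\pi)$ to a finite check per sentence --- is a legitimate route, and if completed it would give decidability, which is stronger than the recursive axiomatizability the paper claims. But the step you yourself single out as the main technical obstacle is resolved incorrectly, and as stated it is false: there is no finite threshold $M(\phi)$ making $\pi(p)\geq M(\phi)$ indistinguishable from $\pi(p)=\infty$ at the level of $\phi$. Take $\phi$ to be $\forall x\,\exists y\,(py=x)$. This sentence is false in $G_\pi$ whenever $\pi(p)$ is finite (the element $p^{-\pi(p)}$ lies in $G_\pi$ but its only candidate $p$-th part, $p^{-\pi(p)-1}$, does not), and true when $\pi(p)=\infty$; so its truth value is not controlled by any inequality $\pi(p)\geq M$. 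The relevant dichotomy is $\pi(p)<\infty$ versus $\pi(p)=\infty$, i.e., whether $|G/pG|$ is $p$ or $1$. Relatedly, the ``classical quantifier elimination for ordered abelian groups with divisibility predicates'' you invoke does not exist in the uniform form you need: eliminating quantifiers from a \emph{sentence} leaves only closed atoms, which carry no information, unless one also adjoins sentential invariants recording discreteness and the indices $|G/pG|$. The assertion that these invariants determine the complete theory of an archimedean (more generally, regularly ordered) abelian group is precisely the Robinson--Zakon theorem.

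With that theorem in hand your reduction does go through under the corrected dichotomy: for a given $\phi$ one checks $\ZZ$ together with one dense representative for each subset of the finitely many primes occurring in $\phi$, according to whether the group is $p$-divisible, and each such check is effective. But at that point you are consuming the same essential input as the paper, which takes a shorter path: it exhibits an explicit recursive axiomatization of $T_\QQ$ (``regularly ordered'' together with ``$Gp\leq p$ for every prime $p$'') and cites Robinson--Zakon for the fact that every model of these axioms is elementarily equivalent to a subgroup of $\QQ$. The paper thus gets recursive axiomatizability directly and cheaply, while your route, once repaired, buys the stronger conclusion of decidability at the cost of the additional uniformity and effectivity analysis.
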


\begin{proof} In \cite{RZ} we find a complete characterization of a class of ordered abelian groups, called regularly ordered, which includes archimedean groups. We take from \cite{RZ} the definitions of regularly ordered group and $Gp$.

If $G$ is a group, and $p$ is a prime number, $Gp$ is the largest finite number of elements of $G$ pairwise non-congruent modulo $p$, or $\infty$ if this number does not exist.

A group is discretely ordered if is has a smallest positive element.

A group is regularly discrete if $G$ is discretely ordered and such that $Gp=p$ for every prime $p$.

$G$ is regularly dense if, for any positive integer $p$ and any elements $a<b$,  there is $x$ between $a$ and $b$ which is divisible by $p$.

$G$ is regularly ordered if  it is regularly discrete or regularly dense.

The axioms of $T_\QQ$ are:  1) $Gp\leq p$ for every prime $p$. 2) $G$ is regularly ordered.

Clearly 1) is expressible by (a recursive set of) first order axioms. For 2),
note that $G$ is regularly ordered if it has an atom and for every pair of integers, $p,q$, either $G$ has an atom and $p$ is prime and is $p$-regularly discrete, or $G$ is $q$-regularly dense. So, also 2) is expressible by (a recursive set of) first order axioms.

In fact, by \cite{RZ}, every regularly ordered group with $Gp\leq p$ for every prime $p$ is elementarily equivalent to a subgroup of $\QQ$.

Conversely, if $G$ verifies the theory of all the subgroups of $\QQ$, then it is regularly ordered, and verifies $Gp\leq p$ for every $p$.

\end{proof}

{\em Geometric logic} is a fragment of infinitary first order  logic that focuses on formulas and reasoning preserved under colimits. Unlike classical logic, which emphasizes arbitrary conjunctions and negations, geometric logic emphasizes finite conjunctions, arbitrary disjunctions, and existential quantifiers. This focus makes geometric logic particularly well-suited to categorical settings and topoi.

While geometric logic primarily allows existential quantifiers, universal quantifiers can be used if the formula being universally quantified can be equivalently rewritten using finite conjunctions and negation of atomic formulas, which geometric logic permits implicitly.

A geometric theory is a collection of axioms expressed in geometric logic, and its classifying topos is a category that serves as a universal semantic model for the theory. The connection between syntactic (logical) and semantic (categorical) perspectives exemplifies the deep interplay between geometric logic and topoi.

Geometric logic turns out to be the right logic for topos theory. Every geometric theory has a classifying topos, and when two theories have the same topos (that is they are Morita equivalent) they have many properties in common, despite they may look very different. For instance, the theories of MV-algebras and abelian $\ell$-groups with strong unit are Morita equivalent, see \cite{CR15}, and the theories of perfect MV-algebras and abelian $\ell$-groups are Morita equivalent, see
\cite{CR16}. So, it seems natural to look for a geometric logic of $\widehat{\NN^\times}$.

\begin{theorem} The class of ordered subgroups of $\QQ$ is geometric in the language of ordered groups.
\end{theorem}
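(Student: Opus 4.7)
The plan is to exhibit an explicit geometric axiomatization and verify that its models are exactly the ordered subgroups of $\QQ$. The key observation is that geometric logic permits arbitrary set-indexed disjunctions; this lets us replace existential quantifiers over auxiliary integer parameters by infinitary disjunctions of atomic formulas, which is exactly what is needed to cut out subgroups of $\QQ$ from the larger class of ordered abelian groups.

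Concretely, the theory $T$ I propose consists of three families of axioms, each a geometric sequent. First, the linearly ordered abelian group axioms: group equations, reflexivity/transitivity/antisymmetry of $\leq$, totality $\top\vdash_{x,y} x\leq y\vee y\leq x$, and translation invariance $x\leq y\vdash_{x,y,z} x+z\leq y+z$; all of these are Horn and therefore geometric. Second, the Archimedean axiom, phrased with $\leq$ only since negation is unavailable:
$$\top\vdash_{x,y}\ (x\leq 0)\ \vee\ \bigvee_{n\geq 1}\ y\leq nx.$$
For $x>0$, the existence of $n$ with $y\leq nx$ implies $y<(n+1)x$, so this is logically equivalent to the standard Archimedean statement. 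Third, the pairwise commensurability axiom
$$\top\vdash_{x,y}\ \bigvee_{(m,n)\in\ZZ^{2}\setminus\{(0,0)\}}\ mx=ny,$$
in which each $mx$ abbreviates an iterated sum (with signs), so every disjunct is atomic. All three families are thus geometric sequents.

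It remains to verify that $T$ cuts out exactly the ordered subgroups of $\QQ$. Soundness is immediate: any two rationals $a/b$ and $c/d$ satisfy $(bc)(a/b)=(ad)(c/d)$, and subgroups of $\QQ$ are Archimedean. For completeness, let $G\models T$. By H\"older's embedding theorem, the first two families produce an order-preserving embedding $\phi\colon G\hookrightarrow\mathbb{R}$. If $G=\{0\}$ we are done; otherwise fix a nonzero $g\in G$, and for each $x\in G$ the commensurability axiom supplies $(m,n)\neq(0,0)$ with $mx=ng$, hence $m\phi(x)=n\phi(g)$ in $\mathbb{R}$. Since $\phi(g)\neq 0$ and $\mathbb{R}$ is torsion-free, $m\neq 0$ (else also $n=0$), so $\phi(x)=(n/m)\phi(g)\in\QQ\cdot\phi(g)$. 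Consequently $\phi(G)\subseteq\QQ\cdot\phi(g)\cong\QQ$ as an ordered group, and $G$ is isomorphic to a subgroup of $\QQ$. The main conceptual step is recognizing that the two nonelementary conditions separating subgroups of $\QQ$ from arbitrary ordered abelian groups (Archimedeanness and pairwise commensurability) are of precisely the shape geometric logic tolerates; the only technical annoyance is that, with negation unavailable, the dichotomy $x\leq 0$ vs.\ $x>0$ has to be rewritten as a disjunction with the consequent of the Archimedean axiom absorbed into the second disjunct.
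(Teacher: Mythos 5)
Your axiomatization is correct, and its essential engine is the same as the paper's: the one condition that separates ordered subgroups of $\QQ$ from general ordered abelian groups is pairwise commensurability, $mx=ny$ for some $(m,n)\neq(0,0)$, and this becomes geometric precisely because geometric logic allows the set-indexed disjunction over integer pairs of atomic equations. Where you differ is in how the verification is carried out. The paper merely states a characterization (group, torsion-free, commensurable, plus a condition relating the order $<_G$ to the coefficients $m,n$) and asserts that ``all these properties are geometric,'' without writing the sequents or proving that the models are exactly the ordered subgroups of $\QQ$; its order-compatibility clause is in fact stated rather loosely. You instead take the ordered-group axioms at face value, add an Archimedean sequent, and prove completeness via H\"older's embedding theorem followed by the observation that commensurability with a fixed nonzero $g$ confines the image to $\QQ\cdot\phi(g)$. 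This buys a genuinely checkable proof of the ``models $=$ ordered subgroups of $\QQ$'' equivalence that the paper leaves implicit. Two small remarks: the totality axiom $\top\vdash_{x,y} x\leq y\vee y\leq x$ is geometric (indeed coherent) but not Horn, so your blanket ``Horn'' label for the first family is slightly off; and your Archimedean axiom is actually redundant, since in a linearly ordered (hence torsion-free) abelian group the commensurability axiom applied to positive $x,y$ already forces $m,n>0$ with $y\leq ny=mx$. Neither point affects correctness.
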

\begin{proof} $G$ is a subgroup of  $\QQ$ if and only if $G$ is a group, $G$ is torsion free, and for every $0 \neq x,y\in G$ there are $m,n\in \NN$ such that $mx=ny$. Furthermore, denoting the order of $G$ by $<_{G}$, the following conditions hold:
$\forall x$ $\forall y$ $\exists n, m\in \mathbb{Z}$ with
\begin{itemize}
\item  $ny = mx$
\item  (($na=mx$) and ($qb=px$)) implies (($a<_{G} b$ iff $mq<np$)), with $p,q\in \QQ$.
\end{itemize}

 All these properties are geometric.
\end{proof}

\begin{theorem} The class $Perf^{\QQ}$ is geometric in the language of $MV$-algebras.
\end{theorem}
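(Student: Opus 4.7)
The plan is to exhibit geometric sequents in the MV-algebra language whose models are exactly the algebras of $Perf^{\QQ}$, by transferring the characterization of subgroups of $\QQ$ from the previous theorem through the categorical equivalence $\Delta^{\QQ}$.

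First I would make perfectness geometric. The axiom $(2x)^2=2(x^2)$ defining $V(C)$ is already an equation. Inside $V(C)$, subdirect decomposition into linearly ordered $V(C)$-algebras reduces the analysis of infinitesimals to a direct computation in $C=\Gamma(\ZZ\lex\ZZ,(1,0))$, from which one reads off that $x$ is infinitesimal iff $x^2=0$ and coinfinitesimal iff $2x=1$. Hence the defining condition $A=Rad(A)\cup co-Rad(A)$ of perfect MV-algebras is captured by the geometric sequent $\vdash_x (x^2=0)\vee(2x=1)$.

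Next I would set up the dictionary between the MV side and the $\ell$-group $G:=\Delta^{-1}(A)$. Under $\Delta$, the set $\{x\in A:x^2=0\}$ is exactly the positive cone of $G$, and on this set $\oplus$ coincides with group addition (no truncation, since sums of infinitesimals remain infinitesimal). Consequently the iterated sum $mx:=x\oplus\cdots\oplus x$ realizes the group-theoretic multiple, and an atomic identity $mx=ny$ between radical elements faithfully encodes the same identity in $G$. With this dictionary, the previous theorem's conditions characterizing subgroups of $\QQ$ translate into two further geometric sequents on $A$: linearity, $\vdash_{x,y}(x\wedge y=x)\vee(x\wedge y=y)$, and commensurability of non-zero radical elements,
\[ (x^2=0)\wedge(y^2=0) \;\vdash_{x,y}\; (x=0)\vee(y=0)\vee\bigvee_{m,n\geq 1}(mx=ny). \]

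The last step is to verify soundness and completeness against $Perf^{\QQ}$. Soundness is routine from the dictionary, noting that torsion-freeness is automatic for ordered groups. For completeness, any $A$ satisfying the axioms is perfect, whence $A\cong\Delta(G)$ for an abelian $\ell$-group $G$; linearity of $A$ transfers to $G$, and the commensurability axiom forces $G$ to have rank one, at which point the classical structure theorem identifies $G$ with an $\ell$-subgroup of $\QQ$, placing $A$ in $Perf^{\QQ}$. The main obstacle I expect is precisely this completeness half: ensuring that the chosen geometric formulas pin down the models inside the $\Delta^{\QQ}$-image and do not admit spurious rank-one ordered groups living outside $\QQ$, which is where I would appeal in detail to the structure theorem for rank-one torsion-free ordered abelian groups.
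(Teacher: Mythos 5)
Your proposal is correct and takes essentially the same route as the paper: both transfer the commensurability characterization of ordered subgroups of $\QQ$ (the infinitary sequent $\bigvee_{m,n\geq 1}(mx=ny)$) across the equivalence $\Delta$ to the radical of a perfect MV-algebra. Your version is in fact considerably more complete than the paper's brief sketch, since you also write down explicit geometric sequents in the MV-language for perfectness ($\vdash_x (x^2=0)\vee(2x=1)$ together with the $V(C)$ equation) and linearity, and you supply the completeness half via the structure theorem for rank-one totally ordered abelian groups, steps the paper leaves implicit.
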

\begin{proof}
Let $A\in Perf^{\QQ}$. Then $(\Delta(A))^{+}\subseteq \QQ$, that is equivalent to the axiom:
\vskip 15pt

$\forall x$ $\forall y$ $\exists n$ $\exists m$  such that: $m,n\in {\mathbb Z}^{+}$

\begin{itemize}
\item  $ny = mx$

\item  (($na=mx$) and ($qb=px$)) implies (($a<_{G} b$ iff $mq<np$)), with $p,q\in \QQ$.

\end{itemize}

 All these properties are geometric.
\end{proof}

Finally, using some of above results of this section, we obtain algebraic and model theoretic properties of the points of the topos $\widehat{\NN^\times}$. We start with

\begin{theorem} (\cite{CC}, theorem 2.1)
The category $\bf{Pt}$$(\widehat{\NN^\times})$ of the points of the topos $\widehat{\NN^\times}$ is equivalent to the category $Ab^\QQ$ of Abelian $\ell$-groups included in $\QQ$ where only increasing group homomorphisms are considered as morphisms.
\end{theorem}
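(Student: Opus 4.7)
The plan is to invoke Diaconescu's theorem, which identifies the points of a presheaf topos $\widehat{\mathcal{C}}$ with flat (filtering) functors $\mathcal{C}\to\mathbf{Set}$. Since $\NN^\times$ is a one-object category, a functor $\NN^\times\to\mathbf{Set}$ is simply a set $X$ equipped with a multiplicative action of the positive integers, and flatness translates, because $\NN^\times$ is commutative and cancellative, into the filteredness axioms: $X$ is nonempty; for every $x,y\in X$ there exist $z\in X$ and $m,n\in\NN^\times$ with $mz=x$ and $nz=y$; and the action is free, i.e.\ $mx=nx$ forces $m=n$.

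Next I would construct the equivalence with $Ab^\QQ$ explicitly. In one direction, to a nonzero subgroup $G\subseteq\QQ$ I associate the flat functor whose underlying $\NN^\times$-set is the positive cone $G^+=G\cap\QQ_{>0}$, with the natural action by multiplication; the three filteredness axioms follow from the divisibility of $\QQ$. In the reverse direction, starting from a flat $\NN^\times$-set $X$ I fix a basepoint $x_0\in X$ and define $\varphi\colon X\to\QQ_{>0}$ by $\varphi(x)=m/n$ whenever $nx=mx_0$; the filteredness axioms ensure that $\varphi$ is well-defined, injective, and $\NN^\times$-equivariant, and its image is the positive cone of a unique subgroup $G\subseteq\QQ$. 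A standard argument shows the construction is independent of the choice of basepoint up to isomorphism in $Ab^\QQ$ and is inverse to the previous one.

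For the morphism correspondence I would observe that a natural transformation of flat functors is exactly an $\NN^\times$-equivariant map $f\colon G^+\to H^+$. The identity $n\cdot f(x/n)=f(x)$, applied as $n$ ranges over the divisors of elements of $G^+$, forces $f$ to coincide with multiplication by a fixed positive rational $c\in\QQ_{>0}$; this extends uniquely to an additive homomorphism $G\to H$, and the positivity of $c$ makes it order-preserving. Conversely, any increasing group homomorphism restricts to an $\NN^\times$-equivariant map on positive cones, and these two assignments are inverse at the level of morphisms.

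The hard part is the morphism-level rigidity: a priori an $\NN^\times$-equivariant map need only commute with integer multiplication and could fail to be additive, so the fact that it must automatically be a $\QQ_{>0}$-linear (hence additive) map relies on the rank-one, divisible-enough structure of $G^+$ inside $\QQ_{>0}$. The same phenomenon explains why exactly the \emph{increasing} homomorphisms appear: order-reversing homomorphisms do not restrict to maps between positive cones, and hence never define morphisms between points of $\widehat{\NN^\times}$.
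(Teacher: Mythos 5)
Your proposal is correct and follows essentially the same route as the paper, which simply cites Connes--Consani for this statement and, in Section 4.1, sketches exactly your argument: Diaconescu's theorem reduces points of $\widehat{\NN^\times}$ to flat $\NN^\times$-actions, and these are identified with positive cones of nontrivial subgroups of $\QQ$ (the paper via the intrinsic addition $kz+k'z=(k+k')z$, you via the equivalent basepoint embedding $\varphi$). Your write-up merely fills in details the paper defers to \cite{CC}, most notably the morphism-level rigidity showing that an $\NN^\times$-equivariant map of positive cones is multiplication by a fixed positive rational.
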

By composing with the functor $\Delta$ we obtain:
\begin{corollary}  The category $\bf{Pt}$$(\widehat{\NN^\times})$ of the points of the topos $\widehat{\NN^\times}$ is equivalent to the category $Perf^\QQ$.
\end{corollary}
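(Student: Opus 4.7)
The plan is to obtain the corollary as a straightforward composition of two equivalences already in hand: the equivalence $\bf{Pt}(\widehat{\NN^\times}) \simeq Ab^\QQ$ from Caramello--Connes (the immediately preceding theorem), and the restriction $\Delta^\QQ$ of the Di Nola--Lettieri functor $\Delta$ to $Ab^\QQ$, which, by definition of $Perf^\QQ = \Delta^\QQ(Ab^\QQ)$, is essentially surjective onto $Perf^\QQ$.

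First, I would recall the general equivalence $\Delta : \ell\mathcal{G} \to PERF$ discussed in Section 2 and its action $G \mapsto \Gamma(\ZZ\lex G,(1,0))$. By restricting source and target we get a functor $\Delta^\QQ : Ab^\QQ \to Perf^\QQ$ which is full, faithful, and, by the definition $Perf^\QQ := \Delta^\QQ(Ab^\QQ)$, essentially surjective; hence it is an equivalence of categories. Here the only point to check carefully is that the morphisms match up: the preceding theorem keeps only \emph{increasing} group homomorphisms between subgroups of $\QQ$ as morphisms of $Ab^\QQ$, and under $\Delta$ these correspond exactly to MV-algebra morphisms between the associated perfect MV-algebras, since MV-algebra morphisms are automatically order preserving and $\Delta$ is a categorical equivalence of the full subcategories.

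Next, I would compose with the Caramello--Connes equivalence: writing $\Phi : \bf{Pt}(\widehat{\NN^\times}) \to Ab^\QQ$ for the equivalence of the cited theorem, the composite
\[
\Delta^\QQ \circ \Phi \;:\; \bf{Pt}(\widehat{\NN^\times}) \longrightarrow Perf^\QQ
\]
is an equivalence of categories, being a composition of equivalences. This yields the corollary.

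The main, and only real, obstacle is the morphism-side bookkeeping in the middle step: one must ensure that the notion of morphism chosen in $Ab^\QQ$ (increasing group homomorphisms) coincides via $\Delta$ with morphisms in $Perf^\QQ$ as a full subcategory of the category of MV-algebras. Once this is verified, no further work is needed, since both $\Phi$ and $\Delta^\QQ$ are already established equivalences in the paper.
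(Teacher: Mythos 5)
Your proposal is correct and follows exactly the paper's route: the paper derives the corollary simply by composing the Caramello--Connes equivalence $\bf{Pt}(\widehat{\NN^\times})\simeq Ab^\QQ$ with the restricted Di Nola--Lettieri equivalence $\Delta^\QQ$. Your additional check that increasing group homomorphisms correspond under $\Delta$ to MV-algebra morphisms is a sensible elaboration of a point the paper leaves implicit, but it does not change the argument.
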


Let us introduce a further definition.
\begin{definition}
If $\mathbf{p}\in Pt(\NN^\times))$, let $Pt_\QQ(\mathbf{p})$ be the corresponding subgroup of $\QQ$. The map $pt_\QQ$ is a functor.  So for every $\mathbf{p}\in Pt(\NN^\times)$ we define
$$\Theta_{pt}(\mathbf{p})=\Theta_\QQ ((\Delta^\QQ (Pt_\QQ(\mathbf{p})))$$
and $\Theta_{pt}$ is a functor from $Pt(\widehat{\NN^\times})$ to $\ell BS$.
\end{definition}

So we have

\begin{corollary} There is a natural functor from the category $Pt(\widehat{\NN^\times})$ to the category of $\ell$-Bisemirings.
 \end{corollary}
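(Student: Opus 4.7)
The plan is to exhibit $\Theta_{pt}$ as a composition of three functors, each already established earlier in the paper, and then to invoke the elementary fact that a composition of functors is itself a functor. Concretely, one writes
$$\Theta_{pt} \;=\; \Theta_\QQ \,\circ\, \Delta^\QQ \,\circ\, Pt_\QQ,$$
where $Pt_\QQ : Pt(\widehat{\NN^\times}) \to Ab^\QQ$ is the equivalence supplied by the theorem of \cite{CC} just invoked, $\Delta^\QQ : Ab^\QQ \to Perf^\QQ$ is the restriction of the Di Nola--Lettieri equivalence $\Delta$ between abelian $\ell$-groups and perfect MV-algebras, and $\Theta_\QQ : Perf^\QQ \to \ell BS$ is the functor from the corollary of Section~3.1, landing in a full subcategory of $C\ell BS \subseteq \ell BS$.

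The two things to check are typing and functoriality. For typing, the image of $Pt_\QQ$ lies in $Ab^\QQ$ by construction, and $\Delta^\QQ(Ab^\QQ) \subseteq Perf^\QQ$ is the very definition of $Perf^\QQ$ used in Section~4, so the composite is well-formed. For functoriality, each of the three constituents has already been shown to act on morphisms in a composition- and identity-preserving way: $Pt_\QQ$ on geometric morphisms of points, $\Delta^\QQ$ on order-preserving group homomorphisms, and $\Theta_\QQ$ on MV-algebra homomorphisms (the latter by restriction of an MV-homomorphism to the zeroset $\{x \mid 2x^2 \ominus x = 0\}$, as already verified when $\theta$ was introduced in Section~3).

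The main point is conceptual rather than technical: one must keep three rather different equivalences aligned in a single diagram — a topos-theoretic one on the left (points of $\widehat{\NN^\times}$ as ordered subgroups of $\QQ$), an MV-algebraic one in the middle ($\Delta^\QQ$), and a semiring-theoretic one on the right ($\Theta_\QQ$, carrying a perfect MV-algebra to its $\ell$-Bisemiring of booleans plus infinitesimals). Once the composite is named, no further verification is needed; in particular, since $\Theta_\QQ$ is itself an equivalence onto its image, $\Theta_{pt}$ inherits full faithfulness, and one could record this as a strengthening of the corollary if desired.
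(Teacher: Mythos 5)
Your proposal is correct and matches the paper's argument exactly: the paper defines $\Theta_{pt}(\mathbf{p})=\Theta_\QQ(\Delta^\QQ(Pt_\QQ(\mathbf{p})))$ immediately before the corollary and obtains the result as the composition of the three functors $Pt_\QQ$, $\Delta^\QQ$ and $\Theta_\QQ$, just as you do. Your additional remarks on typing, functoriality on morphisms, and the inherited full faithfulness are consistent with, and slightly more explicit than, what the paper records.
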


Given the geometric characterization, via a categorical equivalence, of algebras from $Perf^\QQ$ as points of the natural topos $\widehat{\NN^\times}$, it would be interesting to state results about the first order theory and geometric theory of $Perf^\QQ$ to get logical information about the points of $\widehat{\NN^\times}$.

Indeed, we have a threefold interaction among the geometric side, points of the topos
$\widehat{\NN^\times}$, the algebraic side of ordered subgroups of $\QQ$ and the logical side of $Perf^\QQ$, models of an extension of {\L}ukasiewicz logic.






\subsection{Flatness and $Pt(\widehat{\NN^\times})$}

In \cite{CC} theorem 2.1 holds because the points of $\widehat{\NN^\times}$ correspond to flat functors from $\NN^\times$ to Set, we can call them flat actions of $\NN^\times$ and we denote by $Flact(\NN^\times)$ their category.
\begin{definition}
Recall that a functor $F:\NN^\times\to Set$ is flat if
\begin{enumerate}
\item $F(*)=X\not=\emptyset$, where $*$ is the unique object of the category $\NN^\times$
\item given $y,z\in X$ there are $m,n\in \NN$ and $w\in X$ such that $mw=y$ and $nw=z$
\item if $m,n\in \NN$ and $y\in X$ such that $my=ny=w$, then there is $p\in \NN$ and $z\in X$ such that $mp=np$ and $pz=y$.
\end{enumerate}
\end{definition}
Note that all flatness axioms above are expressed in the geometric logic in the sense of topos theory. In particular,  flat functors are closed under colimits.

 In lemma 2.2 of \cite{CC} we find:

\begin{lemma} There is a categorical equivalence between the category $Flact(X)$  and the category of ordered subgroups of the rationals.
\end{lemma}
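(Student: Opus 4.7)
The plan is to exhibit explicit quasi-inverse functors between $Flact(\NN^\times)$ and the category of nontrivial ordered subgroups of $\QQ$, following the classical picture in which flat functors on a commutative monoid correspond to its Ore-type localizations.

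In the easy direction, given $H \subseteq \QQ$ I would define $F_H : \NN^\times \to \text{Set}$ by $F_H(*) := H^+ = H \cap \QQ_{>0}$, with $\NN^\times$-action $n \cdot x := nx$. Axiom (1) is the nontriviality of $H$; axiom (2) is the common-denominator principle restricted to $H$ (given $y,z \in H^+$, their ratio is a positive rational, so one finds $w \in H^+$ and positive integers $m,n$ with $mw=y$, $nw=z$, using that subgroups of $\QQ$ are downward closed under the relation ``is a positive integer multiple of''); axiom (3) is immediate from the torsion-freeness of $H$. Every increasing group homomorphism $H \to H'$ restricts to an $\NN^\times$-equivariant map on positive cones, giving functoriality.

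In the reverse direction, given a flat $F$ with $F(*) = X$, I would form $G_F$ as the set of equivalence classes of pairs $(x,n) \in X \times \NN^+$ modulo the relation $(x,n) \sim (y,m)$ iff there exist $w \in X$ and $p,q \in \NN^+$ with $pw=x$, $qw=y$, and $pm=qn$. Axiom (2) guarantees that any two pairs admit such a common refinement $w$, making the addition $[x,n]+[y,m]$, defined after passing to a common predecessor, well-posed; axiom (3) yields transitivity of $\sim$ and torsion-freeness of the resulting abelian group $G_F$. The order is declared by $[x,n] \leq [y,m]$ iff, after a common refinement $pw=x$, $qw=y$, one has $pm \leq qn$ in $\ZZ$. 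The result is a totally ordered abelian group in which any two nonzero elements are commensurable; any such group embeds uniquely (up to the positive scaling fixed by the image of a chosen basepoint $[x_0,1]$) into $\QQ$ as an ordered subgroup.

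The main obstacle is verifying that the order on $G_F$ is well-defined, that is, independent of the choice of common refinement and compatible with the group operation; this is where axiom (3) is indispensable, since it forces any two refinements of the same pair to agree after passing to a further common multiple. Once this is settled, the remaining verifications are routine: natural transformations of flat functors are precisely $\NN^\times$-equivariant maps $X \to X'$, and these descend to order-preserving group homomorphisms $G_F \to G_{F'}$ and conversely, yielding both functoriality of $\Phi$ and the natural isomorphisms $\Phi \circ \Psi \cong \mathrm{id}$ and $\Psi \circ \Phi \cong \mathrm{id}$ that witness the equivalence.
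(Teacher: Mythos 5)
Your forward functor $H \mapsto (H^{+}, n\cdot x = nx)$ is correct, and your verification of the three flatness axioms for it is sound. The problem is in the reverse direction. The set of equivalence classes of pairs $(x,n)\in X\times\NN^{+}$ under your relation is precisely the set of formal fractions $x/n$ (your relation $(x,n)\sim(y,m)$ iff $pw=x$, $qw=y$, $pm=qn$ says exactly $mx=ny$ after refinement), so $G_F$ is the positive cone of the \emph{divisible hull} of the intended group, not of the group itself. Concretely, for $H=\ZZ$ one has $X=\{1,2,3,\dots\}$, and the class $[(1,2)]$ is a genuine new element not equivalent to any $[(x,1)]$ (that would require $pw=1$ and $p=2q$ in $\NN^{+}$, which is impossible); your $G_{F_\ZZ}$ is therefore $\QQ$, and the composite of your two functors sends $\ZZ$ to $\QQ$. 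Since $\ZZ\not\cong\QQ$ as ordered groups and $F_\ZZ\not\cong F_\QQ$ as flat functors (every element of $\QQ^{>0}$ is divisible by every $n$, which fails in $\ZZ^{+}$), your $\Psi\circ\Phi$ is not naturally isomorphic to the identity and the claimed equivalence breaks down.

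The fix, which is what the paper does, is to introduce no denominators at all: define an addition directly on the set $X$ by using axiom (2) to write $y=pw$, $z=qw$ and setting $y+z:=(p+q)w$, with axiom (3) guaranteeing independence of the choice of common refinement $w$. This makes $X$ itself a cancellative totally ordered commutative semigroup in which any two elements are commensurable; its group of differences is then an ordered group embeddable in $\QQ$ whose strictly positive part is exactly $X$, so the round trip recovers $H$ on the nose rather than its divisible hull. Your discussion of well-definedness of the order via axiom (3), and of morphisms as equivariant maps, transfers essentially unchanged to this corrected construction; you should also make explicit (as you begin to, with the basepoint remark) that the embedding into $\QQ$ is only canonical up to positive scaling, so the target category must be taken up to isomorphism of ordered groups.
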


\begin{proof} Given an action on $X$, the idea is to define a sum on $X$ by $kz+k'z=(k+k')z$. This is well defined since the action is flat, and gives a semigroup which is the positive part of a subgroup of the rationals.  Conversely, the Frobenius action on the positive part of a subgroup of the rationals is flat.
\end{proof}

We remark that the point $\mathbf{p_\QQ}\in \widehat{\NN^\times}$ corresponding to $\QQ$ is the exponentiation functor $F_\QQ:\NN\times X\to X$, with $X=\QQ^{+}$ and $q\in \QQ^{+}$, $F_{\QQ}(n,q)\rightarrow q^{n}$.

Let $L_{Pt}$ be the language with a sequence of functional symbols $\{F_{n}:\NN\times X \rightarrow X\}$. Then we have:

\begin{theorem}
The class $Pt(\widehat{\NN^\times})$ is geometric over the language $L_{Pt}$.
\end{theorem}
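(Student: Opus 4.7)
The plan is to produce a geometric theory $T_{Pt}$ in the signature $L_{Pt}$ whose set-valued models are precisely the flat functors $\NN^\times\to \mathsf{Set}$; combined with the preceding lemma and the categorical equivalence $\mathbf{Pt}(\widehat{\NN^\times})\simeq Flact(\NN^\times)$, this yields the theorem.

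As a preliminary step, I would encode ``being an $\NN^\times$-action'' by the equational sequents $\top \vdash_x F_1(x)=x$ and $\top \vdash_x F_m(F_n(x))=F_{mn}(x)$ for every $m,n\in \NN$. These are universally quantified equations between terms of $L_{Pt}$ and so belong to the geometric fragment.

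Next, I would transcribe each of the three flatness clauses from the definition of a flat functor into a geometric sequent. Non-emptiness of the underlying set becomes $\top \vdash \exists x.\, (x=x)$. The joint-covering clause becomes
$$ \top \;\vdash_{y,z}\; \bigvee_{(m,n)\in\NN\times\NN}\exists w.\,\bigl(F_m(w)=y \wedge F_n(w)=z\bigr), $$
which is an (infinitary) disjunction of existentially quantified finite conjunctions of atomic formulas, hence geometric. For the third (equalizer) clause, for each pair $(m,n)\in\NN\times\NN$ I would let $E_{m,n}=\{p\in\NN:mp=np\}$, a recursively determined external set of naturals, and record the sequent
$$ F_m(y)=F_n(y) \;\vdash_y\; \bigvee_{p\in E_{m,n}}\exists z.\, F_p(z)=y. $$
Both sides lie in the geometric fragment, and the index set of the disjunction is external (computed in the metatheory), which is exactly the pattern geometric logic allows; if $E_{m,n}=\emptyset$ the right-hand side is simply $\bot$, still geometric.

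The main subtlety is checking that every external arithmetic condition on the naturals appearing in the flatness axioms can be absorbed into the index set of an infinitary disjunction, so that no universal quantifier over $\NN$ (other than the outer ones allowed on sequents) or internal negation is ever needed. Once this is verified, the three clauses belong to the geometric fragment by inspection. The union of all the sequents above is then a geometric theory in $L_{Pt}$ whose $\mathsf{Set}$-valued models are, by construction, the flat functors $\NN^\times\to \mathsf{Set}$; via the quoted lemma and the equivalence $\mathbf{Pt}(\widehat{\NN^\times})\simeq Flact(\NN^\times)$, this shows that $Pt(\widehat{\NN^\times})$ is geometric over $L_{Pt}$.
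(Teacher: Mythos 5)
Your proposal is correct and takes essentially the same route as the paper, which offers no explicit proof beyond the remark that ``all flatness axioms above are expressed in the geometric logic.'' Your explicit sequents --- the equational action axioms plus the three flatness clauses, with the external arithmetic conditions on $\NN$ absorbed into the index sets of infinitary disjunctions --- simply make that remark precise.
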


\begin{corollary}
Every point $\mathbf{p}\in Pt(\widehat{\NN^\times})$ is an elementary substructure of $\mathbf{p_{\QQ}}$.

\end{corollary}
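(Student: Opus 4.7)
The plan is to translate the corollary, via the categorical equivalence $Pt(\widehat{\NN^\times})\simeq Ab^\QQ$, into a purely model-theoretic statement about subgroups of $\QQ$ and then to verify it by invoking the results of Section 4 together with the Robinson--Zakon analysis used in the proof of Theorem \ref{thm:tq}. Concretely, given a point $\mathbf{p}$, realize it as an $\NN^\times$-action on a subgroup $G=G_\mathbf{p}\subseteq\QQ$ in which $F_n$ acts as multiplication by $n$; the inclusion $G\hookrightarrow\QQ$ is then an $L_{Pt}$-embedding, since each $F_n$ is preserved and (the positive part of) $G$ is closed under the action.

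Next, I would combine the geometric axiomatization of $Pt(\widehat{\NN^\times})$ in $L_{Pt}$ (the preceding Theorem) with the Robinson--Zakon theorem invoked in Theorem \ref{thm:tq}: every nontrivial subgroup of $\QQ$ is a model of the regularly ordered group theory with $Gp\leq p$, so $G$ and $\QQ$ satisfy the same sentences of the geometric fragment and, by that characterization, the same complete first-order theory in $L_{Pt}$. Thus $G\equiv\QQ$ as $L_{Pt}$-structures, which is one half of $G\prec\QQ$.

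Finally, I would carry out the Tarski--Vaught step: for every $L_{Pt}$-formula $\varphi(x,\vec y)$ and tuple $\vec a\in G$ with $\QQ\models\exists x\,\varphi(x,\vec a)$, produce a witness $b\in G$. Since the only function symbols of $L_{Pt}$ are the unary $F_n$, atomic subformulas reduce to systems of equalities $F_{m_i}(x)=F_{k_i}(a_i)$, and the existence of a witness amounts to solvability of a divisibility system in the subgroup. Here I would exploit the flatness axioms of the functor defining $\mathbf{p}$ — in particular the amalgamation condition ``given $y,z\in X$ there exist $m,n$ and $w$ with $mw=y, nw=z$'' — to produce in $G$ the witness that $\QQ$ provides.

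The main obstacle is precisely this last step: existential formulas of the form $\exists x\,F_m(x)=F_n(a)$ express divisibility conditions that a generic subgroup $G$ of $\QQ$ need not satisfy a priori, so one must leverage the flatness of the $\NN^\times$-action on $G$ (points of $\widehat{\NN^\times}$ are flat functors, not arbitrary ones) to show that any such divisibility solvable in $\QQ$ with parameters from $G$ is already solvable in $G$. This is where the geometric axiomatization of the previous theorem does the essential work, and it is the delicate point on which the passage from $G\equiv\QQ$ to $G\prec\QQ$ rests.
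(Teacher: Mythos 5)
The paper states this corollary without any proof, so there is nothing to compare your argument against directly; judged on its own terms, your proposal has a genuine gap at exactly the point you yourself flag as delicate, and in fact the gap cannot be closed because the statement is false as a claim about full first-order elementary substructures. The step ``$G\equiv\QQ$ as $L_{Pt}$-structures'' does not follow from Robinson--Zakon: their theorem shows that $T_\QQ$ is a recursively axiomatizable theory satisfied by all nontrivial ordered subgroups of $\QQ$, but $T_\QQ$ is not complete, and distinct subgroups of $\QQ$ are in general not elementarily equivalent (their elementary type is determined by divisibility invariants and discreteness). Concretely, take $\mathbf{p}=\ZZ$, a perfectly good point of $\widehat{\NN^\times}$: the parameter-free sentence $\forall x\,\exists y\,(F_2(y)=x)$ holds in the structure attached to $\QQ$ and fails in the one attached to $\ZZ$, so the two are not even elementarily equivalent in $L_{Pt}$. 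The same example kills the Tarski--Vaught step: $\exists y\,(F_2(y)=a)$ with parameter $a=1$ from the substructure has a witness over $\QQ$ but none over $\ZZ$.

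Your hope that flatness supplies the missing divisibility cannot work, because the standard $\NN^\times$-action on $\ZZ^{>0}$ \emph{is} flat --- that is precisely why $\ZZ$ occurs as a point of the topos --- yet $\ZZ$ is not $2$-divisible. Flatness gives only the nonemptiness and filteredness/amalgamation conditions listed in the definition, which every nontrivial subgroup of $\QQ$ satisfies, so it cannot separate divisible points from non-divisible ones. What does survive of the corollary is the trivial upward half: geometric (positive-existential) formulas with parameters in $\mathbf{p}$ that hold in $\mathbf{p}$ remain true in $\mathbf{p_\QQ}$, since such formulas are preserved by any embedding; the downward direction fails even for the geometric fragment, as $\exists y\,(F_2(y)=a)$ shows. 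So the corollary can only be salvaged by weakening ``elementary substructure'' to this one-directional preservation of geometric formulas, or by restricting to divisible subgroups of $\QQ$; your proof strategy, which aims at genuine elementarity for all points, cannot be repaired.
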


\section{Conclusions}\label{sect:concl}

As stated in the Introduction, the purpose of this article is to illustrate the interactions between algebraic structures such as Semirings, Tropical structures and {\L}ukasiewicz logic.

This perspective has led to a generalisation of the concept of Tropicalisation of algebraic structures in a form that opened up an unexpected interpretation of an extension of {\L}ukasiewicz logic. Idempotent semifields are the basic algebraic structures needed to built a tropical version of algebraic geometry: here we showed that if we focus on particular on semifields arising from subclasses of MV-algebras, we can connect the tropical geometry with the rich world of logic, through a topos prespective.

Considering the functorial version of the tropicalization of an algebraic structure, the above interation, among others, allows to show that
from the geometric side, the category of points of the topos $\widehat{\NN^\times}$ results equivalent to several algebraic categories.

As possible continuation of this work, the topics in Section 4 can be extended to find computability and complexity results of first order theories of MV-algebras and of points of the topos $\NN^\times$.
Further, the connections between the logic of perfect MV-algebras, topos and categories arising from idempotent semirings still need a deep investigation.

\end{document}